\newtheorem{thm}{Theorem}[section]
\newtheorem{prop}[thm]{Proposition}
\newtheorem{lem}[thm]{Lemma}
\newtheorem{cor}[thm]{Corollary}
\newtheorem{rem}[thm]{Remark}
\newtheorem{rems}[thm]{Remarks}
\newtheorem{defi}[thm]{Definition}
\newtheorem{exo}{\bf\large Exercice}
\newcommand{\R}{\mathbb{R}}
\newcommand{\Z}{\mathbb{Z}}
\newcommand{\N}{\mathbb{N}}
\newcommand{\C}{\mathbb{C}}
\newcommand{\Sum}{\displaystyle \sum}
\newcommand{\Int}{\displaystyle \int}
\newcommand{\Inf}{\displaystyle \inf}
\newcommand{\Lim}{\displaystyle \lim}
\newcommand{\ds}{\displaystyle}
\newcommand{\beq}{\begin{eqnarray}}
\newcommand{\eeq}{\end{eqnarray}}
\newcommand{\bq}{\begin{equation}}
\newcommand{\eq}{\end{equation}}
\newcommand{\beqn}{\begin{eqnarray*}}
\newcommand{\eeqn}{\end{eqnarray*}}
\newcommand{\bex}{\begin{exo}}
\newcommand{\eex}{\end{exo}}
\newcommand{\ben}{\begin{enumerate}}
\newcommand{\een}{\end{enumerate}}
\let\wt=\widetilde
\let\wh=\widehat
\def\inte#1{
\displaystyle\mathop{#1\kern0pt}^\circ }
\def\cD{{\mathcal D}}
\def\cL{{\mathcal L}}
\def\cP{{\mathcal P}}
\def\cR{{\mathcal R}}
\date{\today}
\begin{document}
 \title[A Fourier   approach to the profile decomposition in   Orlicz spaces]{A Fourier   approach to the profile decomposition in   Orlicz spaces}

\author[H. Bahouri]{Hajer Bahouri}
\address[H. Bahouri]%
{Laboratoire d'Analyse et de Math{\'e}matiques Appliqu{\'e}es UMR 8050 \\
Universit\'e Paris-Est  Cr{\'e}teil \\
61, avenue du G{\'e}n{\'e}ral de Gaulle\\
94010 Cr{\'e}teil Cedex, France}
\email{hbahouri@math.cnrs.fr}
\author[G. Perelman]{Galina Perelman}
\address[G. Perelman]%
{Laboratoire d'Analyse et de Math{\'e}matiques Appliqu{\'e}es UMR 8050 \\
Universit\'e Paris-Est  Cr{\'e}teil\\
61, avenue du G{\'e}n{\'e}ral de Gaulle\\
94010 Cr{\'e}teil Cedex, France}
\email{galina.perelman@u-pec.fr}
 
\keywords{Orlicz; lack of compactness;  profile decomposition}
\subjclass[2010]{}

 \maketitle
\begin{abstract}
This paper is devoted to the characterization of the lack of compactness of the Sobolev embedding of $H^N(\R^{2N})$ into the Orlicz space using  Fourier analysis. The approach adopted in this article is strikingly different from the one used in 2D   in \cite{BMM, BMM2, BMM1}, which consists  in tracking the large values of  the  sequences considered. The analysis we employ in this work is inspired by the strategy of P. G\'erard in  \cite{G} and is based on the notion  introduced in \cite{H} of being  $\log$-oscillating  with respect to a scale.

\end{abstract}



\maketitle 



\section{Introduction and statement of   the results}
\subsection{Setting of the problem and main result}
We are interested in the nonhomogeneous Sobolev space $H^{N}(\R^{2N})$ which consists of functions $u$ in $L^2(\R^{2N})$ such that  
\begin{equation} \|u\|^2_{H^N(\R^{2N})}:= \int_{\R^{2N}} (1+|\xi|^2)^N |\wh {u }(\xi)|^2 \, d \xi < \infty\, ,\label{defsob}\end{equation} where  $\wh {u }$ denotes the Fourier transform of $u$:
$$ \wh {u }(\xi):= \int_{\R^{2N}}\,  {\rm e}^{- i \, x \cdot \xi} \, u(x)\, dx\,,$$ 
with $x \cdot \xi$  the inner product on $\R^{2N}$. It is well known that  $H^{N}(\R^{2N})$ is continuously embedded  in all
Lebesgue spaces $L^p(\R^{2N})$ for $2\leq p<\infty$, but not in
$L^{\infty}(\R^{2N} )$. On the other hand, it
is also known  that  $H^N(\R^{2N})$ embeds in ${\mathcal L}(\R^{2N})$,  where ~${\mathcal L}(\R^{2N})$ denotes the Orlicz space associated to the function $\phi(s)={\rm e}^{ s^2}-1$.  Recall that   the Orlicz spaces are defined as follow: \begin{defi}\label{deforl}\quad\\
Let $\phi : \R^+\to\R^+$ be a convex increasing function such that
$$
\phi(0)=0=\lim_{s\to 0^+}\,\phi(s),\quad
\lim_{s\to\infty}\,\phi(s)=\infty\,.
$$
We say that a measurable function $u : \R^d\to\C$ belongs to
$L^\phi$ if there exists $\lambda>0$ such that
$$
\Int_{\R^d}\,\phi\left(\frac{|u(x)|}{\lambda}\right)\,dx<\infty\,.
$$
We denote then \bq \label{norm}
\|u\|_{L^\phi}=\Inf\,\left\{\,\lambda>0,\quad\Int_{\R^d}\,\phi\left(\frac{|u(x)|}{\lambda}\right)\,dx\leq
1\,\right\}\,. \eq
\end{defi}
\noindent  The Sobolev embedding
 \begin{equation}H^N(\R^{2N})\hookrightarrow {\mathcal L}(\R^{2N})\,, \label{embedorliczNN}\end{equation}
 stems immediately from the
following sharp Moser-Trudinger type inequalities (see \cite{AT,A,Ruf,sharp} for further details):
\begin{prop}\label{proptmN}
\begin{equation}
\label{Mos2} \sup_{\|u\|_{H^N(\R^{2N})}\leq 1}\;\;\int_{\R^{2N}}\,\left({\rm
e}^{\beta_N |u(x)|^2}-1\right)\,dx <\infty,
\end{equation}
where $\ds \beta_N= \frac{2N \pi^{2N} 2^{2N}}{\omega_{2N-1}}$, with  $\ds \omega_{2N-1}= \frac {2 \, \pi^{N}} {(N-1)!}$ the measure of the sphere ${\mathbb  S}^{2N-1}$. 
\end{prop}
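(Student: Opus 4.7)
The plan is to prove the sharp Moser--Trudinger--Adams inequality \eqref{Mos2} by representing $u$ via the Bessel potential and reducing to a one-dimensional sharp exponential estimate, following the strategy of Ruf (for $N=1$) and Ruf--Sani for the full Euclidean extension of Adams' 1988 bounded-domain inequality.

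\textbf{Step 1 (Bessel representation).} Given $u$ with $\|u\|_{H^N(\R^{2N})}\leq 1$, I would introduce $f:=(1-\Delta)^{N/2}u\in L^2(\R^{2N})$, so that $\|f\|_{L^2}\leq 1$ and $u=G_N\ast f$, where $G_N$ is the radial positive Bessel kernel characterized by $\widehat{G_N}(\xi)=(1+|\xi|^2)^{-N/2}$. Standard Fourier analysis shows that $G_N\in L^1(\R^{2N})$, decays exponentially at infinity, and satisfies a singular expansion
\[
G_N(x)\;=\;\gamma_N\,|x|^{-N}\,+\,o(|x|^{-N})\quad\text{as }|x|\to 0,
\]
with an explicit constant $\gamma_N$ that is precisely what will produce $\beta_N$ at the end. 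This identity comes from recognizing $(1+|\xi|^2)^{-N/2}\sim |\xi|^{-N}$ at infinity and comparing with the Riesz potential $I_N(x)=c_N|x|^{-N}$ whose normalization is classical.

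\textbf{Step 2 (Truncation and control of the tail).} Split $G_N=G_N\mathbf{1}_{|x|\leq 1}+G_N\mathbf{1}_{|x|>1}=:G_N^{<}+G_N^{>}$ and correspondingly $u=v+w$ with $v:=G_N^{<}\ast f$, $w:=G_N^{>}\ast f$. The exponential decay at infinity gives $G_N^{>}\in L^2\cap L^\infty$, hence $\|w\|_{L^\infty}\leq \|G_N^{>}\|_{L^2}\|f\|_{L^2}\leq C$ by Cauchy--Schwarz. Using $|u|^2\leq(1+\varepsilon)|v|^2+C_\varepsilon|w|^2$ for an auxiliary $\varepsilon>0$ (sent to $0$ at the end), one has
\[
\int_{\R^{2N}}\bigl(e^{\beta_N|u|^2}-1\bigr)\,dx\;\leq\;e^{C_\varepsilon \beta_N\|w\|_\infty^2}\int_{\R^{2N}}e^{(1+\varepsilon)\beta_N|v|^2}\,dx,
\]
so the whole problem is reduced to a uniform bound on the exponential integral of $v$, which is a compactly-kernel convolution.

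\textbf{Step 3 (Rearrangement and the 1D sharp lemma).} For $v=G_N^{<}\ast f$, I would invoke O'Neil's inequality on the symmetric decreasing rearrangement of a convolution in order to bound $v^*(t)$ by a weighted average of $f^*$ against $(G_N^{<})^*$. After the logarithmic substitution $t=\omega_{2N}\,e^{-s}$ and using the singular asymptotics of $G_N$ from Step~1, the bound $\int e^{\beta_N|v|^2}dx<\infty$ becomes equivalent to a 1D weighted exponential estimate: there is a universal $C$ such that if $\int_0^\infty(F'(s))^2\,ds\leq 1$ and $F(0)=0$, then
\[
\int_0^\infty e^{F(s)^2-s}\,ds\;\leq\;C.
\]
This is precisely Adams' sharp 1988 lemma, proved by slicing according to the level sets of $F$.

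\textbf{Main obstacle.} The whole point of the argument is the \emph{sharp} constant $\beta_N=2N\pi^{2N}2^{2N}/\omega_{2N-1}$. Two places demand care: (i) the exact value of $\gamma_N$ in the expansion of the Bessel kernel, which must be matched to the normalization $2^{2N}\pi^{2N}/\omega_{2N-1}$ of the Riesz kernel $I_N$ on $\R^{2N}$ via the Fourier identity $\widehat{|x|^{-N}}=c\,|\xi|^{-N}$; and (ii) the calibration of constants when passing from $G_N^{<}\ast f$ to its rearrangement via O'Neil, so that no loss occurs in the exponent. Once these two constants are correctly tracked, Adams' 1D lemma applied in Step~3, combined with the tail bound of Step~2, yields the uniform exponential integrability claimed in \eqref{Mos2}.
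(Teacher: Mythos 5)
The paper does not actually prove Proposition \ref{proptmN}: it quotes it as a known sharp Adams-type inequality with references to \cite{A,Ruf,sharp}, and your overall architecture (Bessel potential representation $u=G_N\ast f$, O'Neil's rearrangement inequality, Adams' one-dimensional exponential lemma) is precisely the strategy of the cited Ruf--Sani paper, so you are re-deriving a quoted result along the intended lines.

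However, your Step 2 contains a genuine gap. First, as written the right-hand side $\int_{\R^{2N}}e^{(1+\varepsilon)\beta_N|v|^2}\,dx$ is infinite for trivial reasons: the integrand is $\geq 1$ on a set of infinite measure, so the subtraction of $1$ cannot be discarded; one must first reduce to the set where $|u|$ exceeds a fixed threshold (its measure is $\leq\|u\|_{L^2}^2$ by Chebyshev) and handle the complementary region by $e^{\beta_N t^2}-1\lesssim t^2e^{\beta_N}$ for $t\le 1$ together with the $L^2$ bound. Second, and more seriously, the splitting $|u|^2\leq(1+\varepsilon)|v|^2+C_\varepsilon|w|^2$ with ``$\varepsilon\to0$ at the end'' cannot yield the sharp constant: for every fixed $\varepsilon>0$ the estimate you would need, namely $\sup_{\|f\|_{L^2}\leq1}\int\big(e^{(1+\varepsilon)\beta_N|G_N^{<}\ast f|^2}-1\big)\,dx<\infty$ even over a fixed ball, is false, because Adams' extremal concentrating sequences live at small scales where $G_N^{<}$ coincides with $G_N\sim\gamma_N|x|^{-N}$, so the critical threshold for $v$ is still exactly $\beta_N$ and any larger constant gives $+\infty$. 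In Ruf's 2D proof the analogous $(1+\varepsilon)$-loss is absorbed by a norm deficit ($\|\nabla u\|_{L^2}^2\leq 1-\|u\|_{L^2}^2$), but here your main part $v$ carries the full norm $\|f\|_{L^2}\leq1$ while $\|w\|_{L^\infty}$ is only $O(1)$, so no compensation is available. The cited proofs avoid this by keeping the bounded tail of the Bessel kernel as an \emph{additive} perturbation inside the O'Neil estimate: Adams' one-dimensional lemma tolerates kernels of the form $1+{}$(integrable error), whereas a multiplicative $(1+\varepsilon)$ in the exponent is fatal to sharpness. With Step 2 replaced by that argument (plus the finite-measure reduction, and care with the Fourier normalization implicit in \eqref{defsob} when computing $\gamma_N$), your outline matches the proof the paper refers to.
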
 

The aim of  this paper  is  to  describe the lack of compactness of the Sobolev embedding  of $H^N(\R^{2N})$ into ${\mathcal L}(\R^{2N})$. 
\noindent In  \cite{BMM, BMM2, BMM1},  
the authors   studied this question in the 2D particular case   
and   characterized the lack of compactness  by means   of an asymptotic decomposition in terms  of generalizations of the following
example by Moser (\cite{Lions1, Lions2,M}): 
\begin{eqnarray*}
 f_{\alpha_n}(x)&=&\; \left\{
\begin{array}{cllll}\sqrt{\frac{\alpha_n}{2\pi}}\quad&\mbox{if}&\quad |x|\leq {\rm
e}^{-\alpha_n},\\\\ -\frac{\log|x|}{\sqrt{2\alpha_n\pi}} \quad
&\mbox{if}&\quad {\rm e}^{-\alpha_n}\leq |x|\leq 1 ,\\\\
0 \quad&\mbox{if}&\quad
|x|\geq 1,
\end{array}
\right.
\end{eqnarray*}
where $(\alpha_n)$ is a sequence of positive real numbers going to infinity.   
\noindent In order to state in a clear way the corresponding  result, let us  introduce some notations as in \cite{BMM1}. 
\begin{defi}
\label{orthogen}  We shall  designate by  a scale any sequence $\underline{\alpha}:=(\alpha_n)$
of positive real numbers going to infinity, by   a core any  sequence $\underline{x}:=(x_n)$  of points in $\R^{2N}$ and by a profile any function $ {\varphi}$ belonging to $L^2(\R_+)$.
Given two scales $\underline{\alpha}$, $\underline{\tilde \alpha}$,  two cores $\underline{x}$, $\underline{\tilde x}$ and  two profiles $ {\varphi }$,  $ {\wt \varphi }$, we  shall say that the triplets  $(\underline{\alpha},\underline{x},  {\varphi })$ and $(\underline{\tilde \alpha},\underline{\tilde x},  {\wt \varphi })$, 
 are orthogonal  if
\begin{equation}\label{caseI}  \mbox{either}\qquad \Big|\log\left({\tilde \alpha_n}/{\alpha_n}\right)\Big|\stackrel{n\to\infty}\longrightarrow \infty \,,\end{equation}
or $  \tilde \alpha_n =  \alpha_n $ and
\begin{equation}\label{caseII} - \frac{ \log|x_n- \tilde x_n|}{\alpha_n} \stackrel{n\to\infty}\longrightarrow a \in [-\infty,+\infty[  \,,\end{equation}
with in the case when $\ds a  \in \,] 0,+\infty[$ $\psi$ or ${\tilde \psi}$ null for $s < a$, where $\ds \psi(s):= \int^s_0  {\varphi}(t)  \, dt$ and $\ds {\tilde \psi}(s):= \int^s_0  {\tilde \varphi}(t)  \, dt$.
\end{defi}
In \cite{ BMM1}, the following result is proved. 
\begin{thm}
\label{noradmain} Let $(u_n)$ be a bounded sequence in
$H^1(\R^2)$ such that \bq \label{noradmain-assum1}
u_n\rightharpoonup 0  \quad
\mbox{in}  \quad H^1(\R^{2}), \quad n\to\infty, \eq \bq \label{noradmain-assum2}
\limsup_{n\to\infty}\|u_n\|_{\mathcal L}=A_0 >0 \quad \quad
\mbox{and}\eq
\bq \label{noradmain-assum4} \lim_{R\to\infty}\;
\limsup_{n\to\infty}\,\|u_n\|_{\mathcal L (|x|>R)}=0. \eq Then, up to a subsequence extraction, there
exist a sequence of scales $(\underline{\alpha}^{(j)})$, a  sequence of cores $(\underline{x}^{(j)})$ and a sequence  of profiles $( {\varphi^{(j)} })$ such that the triplets $(\underline{\alpha}^{(j)}, \underline{x}^{(j)},\varphi^{(j)})$ are pairwise
orthogonal in the sense of Definition \ref{orthogen} and,  we have for all
$\ell\geq 1$, \bq \label{noraddecomp}
u_n(x)=\Sum_{j=1}^{\ell}\,\sqrt{\frac{\alpha_n^{(j)}}{2\pi}}\;\psi^{(j)}\left(\frac{-\log|x - x_n^{(j)}|}{\alpha_n^{(j)}}\right)+{\rm
r}_n^{(\ell)}(x),\quad\limsup_{n\to\infty}\;\|{\rm
r}_n^{(\ell)}\|_{\mathcal
L}\stackrel{\ell\to\infty}\longrightarrow 0, \eq
where $\ds \psi^{(j)}(s):= \int^s_0  {\varphi^{(j)}}(t)  \, dt$.  \\ 
Moreover for all
$\ell\geq 1$, we have
the following orthogonality equality  \bq \label{ortogonal2} \|\nabla
u_n\|_{L^2(\R^2)}^2=\Sum_{j=1}^{\ell}\,\|\varphi^{(j)}\|_{L^2(\R_+)}^2+\|\nabla
{\rm r}_n^{(\ell)}\|_{L^2(\R^2)}^2+\circ(1),\quad n\to\infty. \eq
\end{thm}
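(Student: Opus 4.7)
I would follow P.~G\'erard's frequency-localisation strategy \cite{G}, adapted to the Moser--Trudinger setting via Hmidi's notion of $\log$-oscillation \cite{H}. The starting point is that the Moser-type brick $\sqrt{\alpha_n/(2\pi)}\,\psi(-\log|x|/\alpha_n)$ is, on the Fourier side, concentrated in the $\log$-dyadic shell $|\xi|\sim{\rm e}^{\alpha_n}$, and its $\dot H^1$-energy equals, up to $\circ(1)$, $\|\varphi\|_{L^2(\R_+)}^2$. I shall accordingly extract the profiles by isolating, one by one, the $\log$-oscillating components of $(u_n)$ in the Fourier variable, together with their concentration centres in physical space.

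The main preliminary step is a structure lemma: for any bounded sequence $(v_n)$ in $H^1(\R^2)$ that tends weakly to $0$, is tight at infinity in the sense of \eqref{noradmain-assum4} and satisfies $\limsup_n\|v_n\|_{\mathcal L}>0$, there exist a scale $(\alpha_n)$, a core $(x_n)\subset\R^2$ and a nontrivial profile $\varphi\in L^2(\R_+)$ such that, after the change of variable $s=-\log|x-x_n|/\alpha_n$, the rescaled sequence $\sqrt{2\pi/\alpha_n}\,v_n$ weakly produces $\psi(s)=\int_0^s\varphi(t)\,dt$. To produce the scale, I would localise $v_n$ on logarithmic Fourier annuli $A_k^\alpha=\{{\rm e}^{k\alpha}\leq|\xi|\leq{\rm e}^{(k+1)\alpha}\}$ and show, via the sharp Moser--Trudinger inequality of Proposition~\ref{proptmN}, that a non-vanishing Orlicz norm is incompatible with an arbitrarily fine spreading of the $\dot H^1$-energy across these shells: a uniformly positive fraction of $\|\na v_n\|_{L^2}^2$ must remain concentrated in a single shell $|\xi|\sim{\rm e}^{\alpha_n}$. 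The core $(x_n)$ is then produced by a concentration-compactness argument applied to the frequency-localised piece, the tightness hypothesis \eqref{noradmain-assum4} preventing $x_n$ from escaping to infinity.

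Given the first triplet $(\underline{\alpha}^{(1)},\underline{x}^{(1)},\varphi^{(1)})$, set
$$
{\rm r}_n^{(1)}(x)\,:=\,u_n(x)-\sqrt{\frac{\alpha_n^{(1)}}{2\pi}}\,\psi^{(1)}\!\left(\frac{-\log|x-x_n^{(1)}|}{\alpha_n^{(1)}}\right).
$$
A Fourier-side computation, exploiting the sharp $\log$-localisation of the brick's spectrum, yields \eqref{ortogonal2} at step $\ell=1$, and shows moreover that any further $\log$-oscillating component of ${\rm r}_n^{(1)}$ is automatically orthogonal, in the sense of Definition~\ref{orthogen}, to the first triplet: a different scale produces disjoint Fourier supports, while at equal scales the ratio $-\log|x_n^{(2)}-x_n^{(1)}|/\alpha_n^{(1)}$ selects in the limit a shift $a$ under which the support condition on $\psi^{(1)}$ or $\psi^{(2)}$ becomes forced. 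Iterating one obtains a sequence of pairwise orthogonal triplets with $\sum_j\|\varphi^{(j)}\|_{L^2(\R_+)}^2<\infty$, hence $\|\varphi^{(j)}\|_{L^2(\R_+)}\to 0$; applying the contrapositive of the structure lemma to ${\rm r}_n^{(\ell)}$ then gives $\limsup_n\|{\rm r}_n^{(\ell)}\|_{\mathcal L}\to 0$ as $\ell\to\infty$, as required.

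The main obstacle is the quantitative part of the structure lemma: showing that a non-vanishing Orlicz norm forces a macroscopic piece of $\|\na v_n\|_{L^2}^2$ into a single $\log$-dyadic Fourier shell. The analogue for $\dot H^{d/2}\hookrightarrow\mathrm{BMO}$ in \cite{G} reduces to a standard Littlewood--Paley computation, whereas here the Gaussian nonlinearity in $\phi(s)={\rm e}^{s^2}-1$ is genuinely trickier: it requires exploiting the sharpness of the constant $\beta_1$ in \eqref{Mos2} to rule out the scenario where the energy spreads across infinitely many $\log$-shells while still producing a nontrivial Orlicz norm. Once this quantitative concentration is in hand, the extraction of the core and the profile proceeds by soft compactness arguments.
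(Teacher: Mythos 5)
Your plan follows the Fourier route that this paper uses for its general Theorem \ref{main2} (extraction of $\log$-oscillating scales, then cores and profiles, in the spirit of \cite{G} and \cite{H}), not the capacity/symmetrization argument of \cite{BMM1} that the paper actually invokes for the 2D statement; inside this paper the Moser bricks of \eqref{noraddecomp} are only recovered a posteriori through Proposition \ref{genemoserN}. That orientation is legitimate, but your argument has a genuine gap exactly at the step you yourself single out as the main obstacle, and the mechanism you propose to close it would not work.

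First, the concentration claim is false as literally stated: a non-vanishing Orlicz norm does not force a positive fraction of $\|\na v_n\|_{L^2}^2$ into a single shell $|\xi|\sim {\rm e}^{\alpha_n}$ of bounded logarithmic width --- the Moser brick itself has Orlicz norm bounded below while its $\dot H^1$ energy is spread essentially uniformly over $\log|\xi|\in[0,\alpha_n]$, so any fixed-width log-shell carries only $O(1/\alpha_n)$ of it. The statement that is both true and sufficient is an \emph{upper} bound of the Orlicz norm by the supremum over log-dyadic blocks $\{2^k\le \log|\xi|\le 2^{k+1}\}$ of the blockwise energy of the Fourier density: this is Proposition \ref{uspprop*}, proved by expanding ${\rm e}^{|w|^2}$, estimating $\|w\|_{L^{2p}}$ block by block via H\"older and summing with Stirling's formula, and it is used in tandem with G\'erard's scale-decomposition theorem (Theorem \ref{oscomp}), which makes the remainder small precisely in that $\mathcal B$-norm \eqref{defB}. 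The sharpness of $\beta_1$ in \eqref{Mos2} cannot substitute for this: sharpness is a lower-bound phenomenon (failure of the inequality for $\beta>\beta_1$) and gives no control of $\|\cdot\|_{\mathcal L}$ from above in terms of how the energy is distributed among shells, which is what ``ruling out spreading'' requires. Second, your closing step --- ``$\|\varphi^{(j)}\|_{L^2}\to 0$ plus the contrapositive of the structure lemma gives $\limsup_n\|{\rm r}_n^{(\ell)}\|_{\mathcal L}\to 0$'' --- does not follow from a qualitative extraction lemma: one needs a near-maximal selection at each step and a quantitative estimate of the Orlicz norm of the remainder by the supremum of the $L^2$ norms of all extractable profiles; this is exactly the role of the functional ${\mathcal \eta}$ and of the estimates \eqref{est**}--\eqref{est***} in the corollary to Lemma \ref{lem1}. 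Finally, the weak limits obtained after recentring naturally depend on the angular variable, so to end up with profiles in $L^2(\R_+)$ and radial Moser bricks you still need the spherical-averaging step with an Orlicz-small error (Lemma \ref{avlem}); your sketch produces profiles of one variable only by fiat.
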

\begin{rems}\quad \\
\begin{itemize}
\item The example  by Moser reads as $$
f_{\alpha_n}(x)= \sqrt{\frac{\alpha_n}{2\pi}}\;{\mathbf L}\Big(\frac{-\log|x|}{\alpha_n}\Big),
$$
where
\begin{eqnarray*}
{\mathbf L}(s)&=&\; \left\{
\begin{array}{cllll}0 \quad&\mbox{if}&\quad
s\leq 0,\\ s \quad
&\mbox{if}&\quad 0\leq s\leq 1,\\
1 \quad&\mbox{if}&s\geq 1.
\end{array}
\right. \label{profilmos}
\end{eqnarray*}
 \smallbreak
\item It was shown in  \cite{H} that the sequence $( f_{\alpha_n})$  can be written under the form:
$$ f_{\alpha_n} (x)=  \wt {f_{\alpha_n} }(x) + {\rm r}_n (x),$$ 
with $\|\nabla
{\rm r}_n\|_{L^2} \stackrel{n\to\infty}\longrightarrow 0$  and 
\begin{equation} \label{logmoser}\wt {f_{\alpha_n} }(x) = \frac{1}{(2\pi)^2} \sqrt{\frac{2\pi}{\alpha_n}}\int_{ \R^2}  {\rm e}^{ i \, x \cdot \xi} \frac{1}{|\xi|^2}\,  {\varphi } \Big(\frac {\log |\xi| } {\alpha_n}\Big) d \xi\, ,\end{equation}
where $ {\varphi }(\eta) =   {\bf 1}_{[0, 1]} (\eta)$. 
    \end{itemize}
    \end{rems}

 \medbreak
 Let us point out that  the strategy  adopted in \cite{BMM1} to  build the asymptotic decomposition in terms of generalizations of the example by Moser concentrated around cores is based on capacity arguments and uses in a crucial way the fact that the  Schwarz symmetrization minimizes the energy (for  more details, we refer the reader to \cite{Kes, Analysis} and the references therein).  As the Schwarz symmetrization process does not allow to control the $H^N$-norm when $N> 1$, the method  developed in \cite{BMM1} does not apply to higher dimensions.\\
 
 \medbreak
The strategy we adopt here to describe
 the lack of compactness of the  Sobolev  embedding  \eqref{embedorliczNN} in the 2ND general case  is rather inspired by the approach of   P. G\'erard in  \cite{G} which is based on Fourier analysis. However, unlike the framework studied by P. ~G\'erard,  the elements which are responsible of the lack of compactness of \eqref{embedorliczNN} are spread in frequency. We are therefore led to revisit the method of P. G\'erard to address that issue. \\

 \medbreak
 Note that the description of the lack of compactness in other critical Sobolev embeddings was  achieved in \cite{BCG, BZ, G, Ja} and has been at the origin of several developments such as regularity results for Navier-Stokes systems in \cite{HG, gkp}, qualitative study of nonlinear evolution equations in \cite{BG,ker,La,tao} or estimate of the life span of focusing semi-linear dispersive evolution equations in \cite{km}. Further applications of the characterization of the lack of compactness in critical Sobolev embeddings are available in the elliptic framework. Among others, one can mention \cite{BC,Lions1,Lions2,Struwe88}.\\

\bigbreak 

 The main purpose of this paper is to establish the following theorem: 
 
\begin{thm}
\label{main2} Let $(u_n)$ be a bounded sequence in
$H^N(\R^{2N})$ such that \bq \label{main-assum1}
u_n\rightharpoonup 0  \quad
\mbox{in}  \quad H^N(\R^{2N}), \quad n\to\infty\,,\eq \bq \label{main-assum2}
\limsup_{n\to\infty}\|u_n\|_{\mathcal L}=A_0>0 \quad \quad
\mbox{and} \eq \bq \label{main-assum3} \lim_{R\to\infty}\;
\limsup_{n\to\infty}\,\int_{|x|\geq R}\,|u_n|^2\,dx=0\,. \eq Then,  there
exist a sequence of  scales $(\underline{\alpha}^{(j)})$, a sequence of cores $(\underline{x}^{(j)})$ and a sequence of profiles $( {\varphi^{(j)} })$  such that the triplets $(\underline{\alpha}^{(j)}, \underline{x}^{(j)}, {\varphi^{(j)} })$ are pairwise
orthogonal in the sense of Definition \ref{orthogen}  and such that, up to a subsequence extraction,  we have for all
$\ell\geq 1$, \bq \label{decompN}
u_n(x)=\Sum_{j=1}^{\ell}\,   \frac {C_{N}} { \sqrt{\alpha_n^{(j)}}}\int_{|\xi|\geq 1}\frac{ {\rm e}^{ i \, ( x-x^{(j)}_n)\cdot \xi}}{|\xi|^{2N}}   \; {\varphi^{(j)} }\Big(\frac {\log |\xi| } {\alpha^{(j)}_n}\Big) \, d \xi +{\rm
r}_n^{(\ell)}(x),\,  \eq with $\ds C_{N}= \frac {1} { (2\pi)^{N}  \sqrt{\omega_{2N-1}}}$ and $\ds \limsup_{n\to\infty}\;\|{\rm
r}_n^{(\ell)}\|_{\mathcal
L}\stackrel{\ell\to\infty}\longrightarrow 0$. \\ 

 \noindent Moreover for all
$\ell\geq 1$, we have 
the following stability estimates \bq \label{ortogonal} \|
u_n\|_{\dot H^N(\R^{2N})}^2=\Sum_{j=1}^{\ell}\, \|  {\varphi^{(j)} }\|_{L^2(\R_+)}^2+\|
{\rm r}_n^{(\ell)}\|_{\dot H^N(\R^{2N})}^2+\circ(1),\quad n\to\infty. \eq
\end{thm}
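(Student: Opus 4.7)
The plan is to follow the Fourier-analytic philosophy of P.~G\'erard in \cite{G}, adapted to the specific feature of this setting that the elements responsible for the lack of compactness in \eqref{embedorliczNN} are \emph{spread logarithmically in frequency} rather than concentrated at a single dyadic scale. A direct computation in log-polar coordinates $\xi=e^{\alpha s}\omega$, $s\geq 0$, $\omega\in S^{2N-1}$, shows that for any $\alpha>0$, $x\in\R^{2N}$ and $\varphi\in L^2(\R_+)$ the elementary profile
$$
g_{\alpha,x,\varphi}(y)\;:=\;\frac{C_N}{\sqrt{\alpha}}\int_{|\xi|\geq 1}\frac{{\rm e}^{i(y-x)\cdot \xi}}{|\xi|^{2N}}\,\varphi\Big(\frac{\log|\xi|}{\alpha}\Big)\,d\xi
$$
satisfies $\|g_{\alpha,x,\varphi}\|_{\dot H^N(\R^{2N})}=\|\varphi\|_{L^2(\R_+)}$. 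This identity pins down the normalization $C_N=1/((2\pi)^N\sqrt{\omega_{2N-1}})$ and is what will ultimately feed the orthogonality equality \eqref{ortogonal}.

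The heart of the argument is the extraction of the first profile. Given $u\in H^N(\R^{2N})$, $\alpha>0$ and $x\in\R^{2N}$, I associate the log-spherical average
$$
\Phi_{\alpha,x}[u](s)\;:=\;\frac{\sqrt{\alpha}}{(2\pi)^{2N}C_N\,\omega_{2N-1}}\int_{S^{2N-1}}{\rm e}^{ix\cdot e^{\alpha s}\omega}\,e^{2N\alpha s}\,\wh{u}(e^{\alpha s}\omega)\,d\omega,
$$
which belongs to $L^2(\R_+)$ with $\|\Phi_{\alpha,x}[u]\|_{L^2(\R_+)}\lesssim\|u\|_{\dot H^N}$ and which reproduces $\varphi$ when applied to $g_{\alpha,x,\varphi}$. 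I set $\cN(u):=\sup_{\alpha,x}\|\Phi_{\alpha,x}[u]\|_{L^2(\R_+)}$ and establish the central comparison
$$
\limsup_{n\to\infty}\|u_n\|_{\mathcal L}\;\leq\;C\,\limsup_{n\to\infty}\cN(u_n),
$$
which says that any lack of compactness in the Orlicz norm must be witnessed by a non-vanishing log-oscillating component in $\wh{u}_n$. Granting this, I select $(\alpha_n^{(1)},x_n^{(1)})$ nearly achieving $\cN(u_n)$, extract along a subsequence a weak limit $\varphi^{(1)}\in L^2(\R_+)$ of $\Phi_{\alpha_n^{(1)},x_n^{(1)}}[u_n]$, and take the first atom to be $g_{\alpha_n^{(1)},x_n^{(1)},\varphi^{(1)}}$. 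The weak convergence $u_n\rightharpoonup 0$ in $H^N$ combined with the tightness \eqref{main-assum3} then forces $\alpha_n^{(1)}\to\infty$.

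The iteration proceeds in the usual way on the remainder $r_n^{(1)}=u_n-g_{\alpha_n^{(1)},x_n^{(1)},\varphi^{(1)}}$: the Hilbert structure of $\dot H^N$ directly yields the orthogonality identity \eqref{ortogonal} at step $\ell=1$, and the construction is repeated. The pairwise orthogonality of the triplets in the sense of Definition \ref{orthogen} comes from inspecting the extraction algorithm: at each step the newly produced scale is either at log-distance tending to infinity from every previous one, or coincides with one of them and the cores are then forced to separate; the condition that $\psi^{(j)}$ or $\psi^{(k)}$ vanish on $\{s<a\}$ when $a\in\,]0,+\infty[$ reflects the fact that a translation by $|x_n^{(j)}-x_n^{(k)}|\sim e^{-a\alpha_n}$ truncates the log-profile below height $a$. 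Finally, \eqref{ortogonal} controls $\sum_j\|\varphi^{(j)}\|_{L^2(\R_+)}^2$ by $\limsup\|u_n\|_{\dot H^N}^2$, which forces $\cN(r_n^{(\ell)})\to 0$ as $\ell\to\infty$, and the central comparison then gives $\|r_n^{(\ell)}\|_{\mathcal L}\to 0$.

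The main obstacle is the proof of the central comparison $\limsup\|u_n\|_{\mathcal L}\lesssim\limsup\cN(u_n)$. In the subcritical framework $\dot H^s\hookrightarrow L^p$ of \cite{G}, the analogous bound is essentially automatic via Besov-type duality; here, at the critical Moser--Trudinger level, only the exponential inequality \eqref{Mos2} is available, and one must combine it with a careful decomposition of $\wh{u}_n$ into log-frequency slices of width $O(\alpha)$ and with the notion of being $\log$-oscillating with respect to a scale from \cite{H}, in order to rule out every concentration mechanism other than the targeted one. The bookkeeping needed to verify the precise orthogonality conditions of Definition \ref{orthogen}, especially in the degenerate case $a\in\,]0,+\infty[$ with a truncated profile, will also require some technical care.
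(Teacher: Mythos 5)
Your plan reproduces the general shape of a profile decomposition, but it leaves unproven exactly the step that carries all the analytic content, and as formulated that step is not merely ``technical bookkeeping''. The central comparison $\limsup_n\|u_n\|_{\mathcal L}\lesssim \limsup_n\cN(u_n)$ is essentially equivalent to the theorem itself: it is the statement that the only mechanism for non-compactness in the Orlicz norm is a log-concentrated, spherically averaged, translated profile, and you give no argument for it beyond saying one should slice in log-frequency and use the $\log$-oscillation notion. In the paper this statement is never proved in one stroke for a general bounded sequence; it is obtained by a two-stage process, each stage with its own quantitative estimate: first a scale decomposition of $\varphi_n(t,\omega)$ (Theorem \ref{oscomp}, i.e.\ G\'erard's Theorem 2.9 applied in $L^2(\R_+\times{\mathbb S}^{2N-1})$) whose remainder is small only in the ${\mathcal B}$-norm, transferred to the Orlicz norm by the nontrivial exponential-series estimate of Proposition \ref{uspprop*}; then, within each scale, a core extraction (Lemma \ref{lem1}) whose remainder is shown small in ${\mathcal L}$ through the $L^\infty$ bound \eqref{est**} in terms of the capture functional $\eta$ together with \eqref{est***}; and finally the replacement of the angular profiles $\Phi^{(l)}(t,\omega)$ by their spherical averages, with Orlicz-small (but not $\dot H^N$-small) error, via Lemma \ref{avlem}. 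None of these three estimates is supplied or replaced by anything in your proposal, so the proof of both $\limsup_n\|{\rm r}_n^{(\ell)}\|_{\mathcal L}\to 0$ and of the comparison you rely on is missing.

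There is also a structural defect in the extraction scheme itself. You define $\cN(u)$ as a supremum of norms $\|\Phi_{\alpha,x}[u]\|_{L^2(\R_+)}$, choose $(\alpha_n^{(1)},x_n^{(1)})$ nearly achieving it, and then take a \emph{weak} limit $\varphi^{(1)}$ of $\Phi_{\alpha_n^{(1)},x_n^{(1)}}[u_n]$. Nothing guarantees that this weak limit captures a fixed fraction of $\cN(u_n)$ (the mass of $\Phi_{\alpha_n^{(1)},x_n^{(1)}}[u_n]$ may escape in the $t$-variable, and weak limits can be zero even when the norms stay bounded below); consequently your termination argument --- ``$\sum_j\|\varphi^{(j)}\|_{L^2}^2<\infty$ forces $\cN({\rm r}_n^{(\ell)})\to 0$'' --- does not follow. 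This is precisely why the paper defines its functional $\eta(\varphi_n)$ in \eqref{defeta} as a supremum over \emph{weak limits} of the modulated sequences, after having already reduced, by the scale decomposition, to sequences that are ${\bf 1}$-concentrated so that no mass can escape in $t$; with that definition the selected profile satisfies $\|\Phi^{(L+1)}\|^2\geq\tfrac12\,\eta({\rm r}_n^{(L)})$ by construction. Similarly, the orthogonality statements you attribute to ``the Hilbert structure of $\dot H^N$'' and to ``inspecting the algorithm'' require genuine arguments: the orthogonality of distinct cores at the same scale, and the vanishing of $\psi^{(j)}$ below the level $a$ in the degenerate case of \eqref{caseII}, are obtained in the paper by explicit oscillatory-integral estimates (the bounds on ${\mathcal I}_n$, ${\mathcal R}_n$ and ${\mathcal J}_n$ in the proof of Lemma \ref{lem1}), and the orthogonality across different scales uses the scale decomposition you have bypassed. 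As it stands, the proposal is a program whose key inequality and key extraction lemma are both assumed rather than proved.
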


\medbreak 

The following proposition the proof of which is postponed to Appendix \ref{genemoser} allows to relate in the 2D case Theorem \ref{main2}  to Theorem \ref{noradmain}.  \begin{prop}
\label{genemoserN} Let  $(\alpha_n)_{n \geq 0 }$ be a  scale in the sense of Definition \ref{orthogen} and $\varphi$  in $L^2 (\R_+)$. 
Set with the notation of Theorem   \ref{main2}
$$ g_n(x):=  \frac {C_{N}} { \sqrt{\alpha_n}} \int_{|\xi| \geq {\rm
1}}\frac{ {\rm e}^{ i \, x \cdot \xi}}{|\xi|^{ 2 N }}  \; {\varphi} \Big(\frac { \log |\xi|} {\alpha_n}\Big) \, d \xi\,.$$ 
Then 
 \begin{equation}  \label{eqmoserN}g_n(x) = \wt C_N \,\sqrt{\alpha_n}\;\psi\Big(\frac{-\log|x|}{\alpha_n}\Big)\, + {\rm t}_n(x)\,,  \end{equation}
  with $\ds \psi(y)= \int^y_0  {\varphi}(t)  \, dt$, $\ds \wt C_N= \frac {\sqrt{ \omega_{2N-1}}} { (2\pi)^N}$  and $\|
{\rm t}_n\|_{\mathcal L} \stackrel{n\to\infty}\longrightarrow0$.
\end{prop}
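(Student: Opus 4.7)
The plan is to recast both $g_n$ and the target profile $h_n(x) := \wt C_N\sqrt{\alpha_n}\,\psi(-\log|x|/\alpha_n)$ as radial integrals weighted by the same profile $\varphi$, and then to prove that their difference $t_n := g_n - h_n$ is uniformly bounded in $L^\infty$ and tends to $0$ in $L^2$. Combined with the Taylor expansion of $\phi(s)=e^{s^2}-1$, these two properties will force $\|t_n\|_{\mathcal L}\to 0$.

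Passing to polar coordinates $\xi = r\omega$ in the definition of $g_n$ and substituting $s=\log r/\alpha_n$ gives
\[
g_n(x) = C_N\sqrt{\alpha_n}\int_0^\infty \varphi(s)\, J(|x|e^{\alpha_n s})\, ds,\qquad
J(\rho) := \int_{\mathbb S^{2N-1}} e^{i\rho\,\omega\cdot e_1}\, d\omega.
\]
On the other hand, writing $\psi(y)=\int_0^\infty \varphi(s)\mathbf{1}_{s\leq y}\, ds$ and using $\wt C_N = C_N\,\omega_{2N-1}$ together with $J(0)=\omega_{2N-1}$, one rewrites
\[
h_n(x) = C_N\,\omega_{2N-1}\sqrt{\alpha_n}\int_0^\infty \varphi(s)\,\mathbf{1}_{|x|e^{\alpha_n s}\leq 1}\, ds,
\]
so that $t_n(x) = C_N\sqrt{\alpha_n}\int_0^\infty \varphi(s)\, K(|x|e^{\alpha_n s})\, ds$ with the kernel $K(\rho) := J(\rho) - \omega_{2N-1}\mathbf{1}_{\rho\leq 1}$. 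This $K$ satisfies $|K(\rho)|\leq C\rho^2$ for $\rho\leq 1$ (Taylor expansion, $J'(0)=0$ by symmetry) and $|K(\rho)|=|J(\rho)|\leq C\rho^{-(2N-1)/2}$ for $\rho\geq 1$ (stationary phase for the Fourier transform of the surface measure on $\mathbb S^{2N-1}$); consequently, $\int_0^\infty |K(\rho)|^2/\rho\, d\rho$ is finite, the integrand being $O(\rho^3)$ near $0$ and $O(\rho^{-2N})$ at infinity.

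For the $L^\infty$ bound, Cauchy--Schwarz in $s$ followed by the change of variable $\rho=|x|e^{\alpha_n s}$ yields
\[
|t_n(x)|\leq C_N\|\varphi\|_{L^2(\R_+)}\Bigl(\int_{|x|}^\infty \frac{|K(\rho)|^2}{\rho}d\rho\Bigr)^{1/2}\leq M,
\]
uniformly in $x\in\R^{2N}\setminus\{0\}$ and in $n$. For the $L^2$ bound, Plancherel first gives $\|g_n\|_{L^2}^2 = \int_0^\infty |\varphi(s)|^2 e^{-2N\alpha_n s}\, ds\to 0$ by dominated convergence; the analogous computation for $h_n$, after polar coordinates and the rescaling $z=2N\alpha_n y$, reduces to showing that $\alpha_n\,\psi(z/(2N\alpha_n))^2\to 0$ pointwise in $z>0$, which follows from the Cauchy--Schwarz bound $\psi(y)^2\leq y\int_0^y|\varphi|^2$ together with $\int_0^y|\varphi|^2\to 0$ as $y\to 0^+$, again via dominated convergence.

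Finally, inserting the Taylor expansion $\phi(s)=\sum_{k\geq 1}s^{2k}/k!$ and interpolating $\|t_n\|_{L^{2k}}^{2k}\leq M^{2k-2}\|t_n\|_{L^2}^2$, one obtains for every $\lambda>0$
\[
\int_{\R^{2N}}\phi\Bigl(\frac{|t_n(x)|}{\lambda}\Bigr)\,dx \leq \frac{\|t_n\|_{L^2}^2}{\lambda^2}\,e^{M^2/\lambda^2}\stackrel{n\to\infty}{\longrightarrow} 0,
\]
whence $\|t_n\|_{\mathcal L}\to 0$. The main obstacle is the uniform $L^\infty$ bound on $t_n$: without the subtraction of $\omega_{2N-1}\mathbf{1}_{\rho\leq 1}$ in $K$, the analogue $\int_{|x|}^\infty |J(\rho)|^2/\rho\, d\rho$ diverges like $-\log|x|$ as $|x|\to 0$, so identifying the correct remainder kernel and controlling its $L^2(d\rho/\rho)$-norm on all of $(0,\infty)$ is the pivotal computation—and it is precisely this cancellation that reconciles the Fourier side description of $g_n$ with the physical-side logarithmic profile $\psi(-\log|x|/\alpha_n)$.
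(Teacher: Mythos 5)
Your argument is correct, but it follows a genuinely different route from the paper. The paper first reduces by density to $\varphi\in{\mathcal D}(\R_+)$ (controlling the error in ${\mathcal L}$ through the Sobolev embedding), rewrites the Moser-type profile as $\frac{C_N}{\sqrt{\alpha_n}}\int_{1\leq|\xi|\leq 1/|x|}|\xi|^{-2N}\varphi\big(\tfrac{\log|\xi|}{\alpha_n}\big)d\xi$, and then splits $g_n$ at $|\xi|=\max(1/|x|,1)$: on the low part it uses $|{\rm e}^{i x\cdot\xi}-1|\lesssim|x||\xi|$, on the high part a single integration by parts in $\xi$, obtaining the quantitative bound $\|{\rm t}_n\|_{L^\infty}\lesssim \alpha_n^{-1/2}$ together with $L^2$-smallness, and concludes because ${\mathcal L}$ behaves like $L^2$ on $L^\infty$-bounded sets. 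You instead average over the sphere first, so that the whole difference is driven by the single radial kernel $K(\rho)=J(\rho)-\omega_{2N-1}\mathbf{1}_{\rho\leq1}$, and the pivotal input becomes $K\in L^2(d\rho/\rho)$, which mixes the symmetry cancellation $|K(\rho)|\lesssim\rho^2$ near $0$ with the Bessel/stationary-phase decay $|J(\rho)|\lesssim\rho^{-(2N-1)/2}$ at infinity; Cauchy--Schwarz then gives a uniform (though not decaying) $L^\infty$ bound for general $\varphi\in L^2(\R_+)$, with no density step, and the $L^2$-smallness of $g_n$ and of the Moser profile is checked separately by Plancherel and a direct rescaling. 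What each approach buys: yours is structurally cleaner (no smooth cutoffs, no approximation argument, works directly in $L^2$, and isolates exactly the cancellation that makes the statement true), at the price of invoking the classical decay of $\widehat{d\sigma}_{{\mathbb S}^{2N-1}}$; the paper's is more elementary (only a crude integration by parts) and yields the sharper rate $O(\alpha_n^{-1/2})$ in $L^\infty$ for smooth profiles, and it recycles verbatim the mechanism of its Lemma 3.4. One minor point you should make explicit: for $\varphi\in L^2\setminus L^1$ the defining Fourier integral of $g_n$ is not absolutely convergent, so the passage to the iterated radial form $C_N\sqrt{\alpha_n}\int_0^\infty\varphi(s)J(|x|{\rm e}^{\alpha_n s})ds$ should be justified by truncating $\varphi$ to $[0,R]$ and letting $R\to\infty$ (the left side converging in $L^2$, the right side pointwise for $x\neq0$ by your own Cauchy--Schwarz bound); this is routine and does not affect the substance.
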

By this proposition, one can write the elementary concentrations
\begin{equation}  \label{el} f^{(j)}_n(x):=  \frac {C_{N}} {  \sqrt{ \alpha^{(j)}_n}} \int_{|\xi| \geq {\rm
1}}\frac{ {\rm e}^{ i \, (x-x^{(j)}_n) \cdot \xi}}{|\xi|^{ 2 N }}  \; {\varphi^{(j)}} \Big(\frac { \log |\xi|} {\alpha^{(j)}_n}\Big) \, d \xi\,,\end{equation}
under the form \begin{equation}  \label{eqmoserNN}f^{(j)}_n(x)  = \wt C_N \,\sqrt{\alpha^{(j)}_n}\;\psi^{(j)}\Big(\frac{-\log|x-x^{(j)}_n|}{\alpha^{(j)}_n}\Big)+ {\rm t}^{(j)}_n(x),  \end{equation}
where $\ds \wt C_N= \frac {\sqrt{ \omega_{2N-1}}} { (2\pi)^N}$ and $\ds \|
{\rm t}^{(j)}_n\|_{\mathcal L} \stackrel{n\to\infty}\longrightarrow0$. This allows to view Theorem \ref{main2}  as a generalization of  Theorem \ref{noradmain}. 

\begin{rems}\quad \\
\begin{itemize}
\item Arguing exactly as in the proof of Proposition 1.15 in \cite{BMM} and  making use of    \eqref{Mos2} and  \eqref{eqmoserNN}, we can prove that the elementary concentrations $f^{(j)}_n$ satisfy 
\bq \label{profileN}
\Lim_{n\to\infty}\,\|f^{(j)}_n\|_{{\cL}}=\frac{1}{\sqrt{\beta_N}}\,\max_{s>0}\;\frac{|\psi^{(j)}(s)|}{\sqrt{s}}\, , \eq
where $\ds \psi^{(j)}(s):= \int^s_0  {\varphi^{(j)}}(t)  \, dt$.  \\ 
 \item Note that the elementary concentrations $f^{(j)}_n$ defined above by \eqref{el}  belong to $H^N(\R^{2N})$ while a priori the generalizations of the 
example by Moser $\ds \wt C_N \,\sqrt{\alpha^{(j)}_n}\;\psi^{(j)}\Big(\frac{-\log|x-x^{(j)}_n|}{\alpha^{(j)}_n}\Big)$ only belong to $H^1(\R^{2N})$.

 \smallbreak
 \item The hypothesis of compactness at infinity \eqref{main-assum3} is crucial: it allows to avoid the loss of Orlicz norm at infinity; without this assumption, the result is not true. 
 
  \smallbreak
 \item Note also that the lack of compactness of the Sobolev embedding \eqref{embedorliczNN}  was studied in the radial framework for the  4D case in \cite{BZ} by  tracking the large values of  the  sequences studied. 

 \end{itemize}
\end{rems}
 \subsection{Layout of the paper}

The  paper is organized as follows:   in Section \ref{decomposition} we recall some useful results  about profile decompositions.  The proof of Theorem \ref{main2} is addressed in Section \ref{new}. 
In Section \ref{Appendix}
 we  establish Proposition \ref{genemoserN} and highlight the connection between  Orlicz space and some space involved in the proof of Theorem \ref{main2}.

 \medskip

We mention that the letter~$C$ will be used to denote an absolute constant
which may vary from line to line.  We also use $A\lesssim B$ to
denote an estimate of the form $A\leq C B$ for some absolute
constant $C$. For simplicity, we shall also still denote by $(u_n)$ any
subsequence of $(u_n)$ and designate by $\circ(1)$ any sequence which tends to $ 0 $ as $n$ goes to infinity.  

 \section{Background material}\label{decomposition}

   Let us  start by introducing    the notions of being $\log $-oscillating with respect to a scale and of being $\log$-unrelated to any scale, which are a natural adaptation to our setting of the vocabulary of   P. G\'erard in  \cite{G}.
\begin{defi}
\label{oscilunreorlicz}
Let $v:= (v_n)_{n \geq 0}$ be a bounded sequence in $L^2(\R^d)$ and $\alpha:=(\alpha_n)_{n \geq 0}$ be a sequence of positive real numbers.
\begin{itemize}
\item The sequence $v$ is said $\alpha$ $\log $-oscillating if
\begin{equation} \label{oscilorlicz}\limsup_{n\to\infty}\; \left(\int_{|\xi|  \leq {\rm e}^{ \frac{\alpha_n} R }}  |\wh{ v_n}(\xi)|^2 \, d\xi + \int_{|\xi|  \geq {\rm e}^{ R \alpha_n}}  |\wh{ v_n}(\xi)|^2 \, d\xi\right) \stackrel{R\to\infty}\longrightarrow 0\,. \end{equation}
\item The sequence $v$ is said $\log$-unrelated to the scale $\alpha$ if for any real numbers $b >a>0$
\begin{equation} \label{unscalesorlicz}\int_{{\rm e}^{ a \alpha_n} \leq  |\xi|  \leq {\rm e}^{ b \alpha_n}}  |\wh{ v_n}(\xi)|^2 \, d\xi  \stackrel{n\to\infty}\longrightarrow 0\,. \end{equation}\end{itemize}
\end{defi}
\begin{rems}  \quad \\ \label{vocabulary}
\begin{itemize}
\item In what follows, it will be convenient for us to write in the 2ND case \footnote{where obviously  $ \xi=|\xi|\cdot \omega $, with $\omega \in {\mathbb  S}^{2N-1}$.} $$\wh{  v_n}(\xi) = \frac 1 {|\xi|^{N}}  \; { \varphi_n}(\log |\xi|, \omega).$$
 Note that if $(v_n)_{n \in \N}$ is a bounded sequence in $L^2(\R^{2N})$ whose Fourier transform is supported in  $\ds \big\{\xi \in \R^{ 2 N }; |\xi| \geq 1 \big\}$, then $(\varphi_n)_{n \in \N}$ is a bounded sequence in the space~$L^2 (\R_+ \times {\mathbb  S}^{2N-1})$. In terms of $(\varphi_n)$, the property for $(v_n)$ to be  $\alpha$ $\log $-oscillating can be written as: 
 $$\qquad \qquad \limsup_{n\to\infty}\big(\int_{{\mathbb  S}^{2N-1}} \int^{\frac {\alpha_n } {R }}_0 |\varphi_n(t,\omega)|^2  dt   d\omega+\int_{{\mathbb  S}^{2N-1}} \int^{\infty}_{{\alpha_n }R } |\varphi_n(t,\omega)|^2 dt  d\omega \big)\stackrel{R\to\infty}\longrightarrow 0\,.$$ 
 We will say that $(\varphi_n)_{n \in \N}$  is $\ds \frac 1 {\alpha_n }$-concentrated.
 \medbreak \noindent Similarly, the property for $(v_n)$  being   $\log $-unrelated to the scale $\alpha$ means that for any real numbers $b >a>0$
  $$   \int_{{\mathbb  S}^{2N-1}} \int^{b \, {\alpha_n }}_{a \, {\alpha_n }} |\varphi_n(t,\omega)|^2  \, dt  \, d\omega\stackrel{n\to\infty}\longrightarrow 0\,.$$ 
  The sequence  $(\varphi_n)_{n \in \N}$ will be said unrelated to the scale $\ds \frac 1 {\alpha_n }\cdot$ 
\item
 According to \eqref{logmoser}, we can easily prove that the example by Moser  $\nabla f_{\alpha_n}$ is $\alpha$ $\log $-oscillating. Indeed for $R \geq 1$\begin{eqnarray*}  \int_{ |\xi| \leq  {\rm e}^{  \frac {\alpha_n} R} }
 | \wh {\nabla f_{\alpha_n}}(\xi)|^2 \,d\xi &=& \int_{ |\xi| \leq  1}  | \wh {\nabla f_{\alpha_n}}(\xi)|^2 \,d\xi  + \int_{ 1\leq |\xi| \leq  {\rm e}^{  \frac {\alpha_n} R}}  | \wh {\nabla f_{\alpha_n}}(\xi)|^2 \,d\xi\,.\end{eqnarray*}
Firstly    $$ \int_{ |\xi| \leq  1}  | \wh {\nabla f_{\alpha_n}}(\xi)|^2 \,d\xi \leq \|f_{\alpha_n}\|_{L^2}^2\stackrel{n\to\infty}\longrightarrow 0\,,$$ and secondly thanks to \eqref{logmoser} \begin{eqnarray*} \int_{ 1\leq |\xi| \leq  {\rm e}^{  \frac {\alpha_n} R}}  | \wh {\nabla f_{\alpha_n}}(\xi)|^2 \,d\xi\leq  \frac{C}{\alpha_n} \int^{{\rm e}^{  \frac {\alpha_n} R}}_{ 1} \frac{dr}{r} + \circ(1)\leq  \frac{C}{R}+ \circ(1)\,,\end{eqnarray*}
 where $\circ(1)$ denotes
a sequence which tends to $ 0 $ as $n$ goes to infinity.  Therefore 
$$ \limsup_{n\to\infty}\; \int_{|\xi| \leq  {\rm e}^{  \frac {\alpha_n} R}}  |\wh {\nabla f_{\alpha_n}}(\xi)|^2 \, d\xi \stackrel{R\to\infty}\longrightarrow 0\,. $$
Finally Identity \eqref{logmoser} easily implies that 
$$ \limsup_{n\to\infty}\; \int_{|\xi| \geq  {\rm e}^{  R \alpha_n}}  |\wh {\nabla f_{\alpha_n}}(\xi)|^2 \, d\xi \stackrel{R\to\infty}\longrightarrow 0\,, $$
which  ends the proof of the result. \end{itemize}\end{rems}\noindent 

In the proof of Theorem \ref{main2}, we shall need  the following result which is an immediate consequence of Theorem 2.9 in \cite{G}.  Note that in that framework a scale  will designate a sequence $h:=(h_n)$
of positive real numbers and  ${\bf 1}$ the scale in which all the terms are equal to the number $1$.\begin{thm} \label{oscomp}
Let  $(\varphi_n)_{n \in \N}$ be a bounded sequence in $L^2 (\R_+ \times {\mathbb  S}^{2N-1})$. Then, there
exist a sequence of scales $(h^{(j)})_{j \geq 1}$   and a sequence of bounded sequences  $(g^{(j)})_{j \geq 1}$ in the space~$L^2 (\R_+ \times {\mathbb  S}^{2N-1})$ such that:
\begin{itemize}
\item if $j \neq k$,    $  \Big|\log\left({ h^{(j)}_n}/h^{(k)}_n\right)\Big|\stackrel{n\to\infty}\longrightarrow \infty$; 
\item for all $j$, $g^{(j)}$ is $h^{(j)}$-concentrated;
\item up to a subsequence extraction, we have for all
$\ell\geq 1$
 \bq \label{decompsc}
\varphi_n(t,\omega)=\Sum_{j=1}^{\ell}\,g_n^{(j)}(t,\omega)+ {\rm
r}_n^{(\ell)}(t,\omega),\quad \mbox{with} \quad \limsup_{n\to\infty}\; \| {\rm
r}_n^{(\ell)}\|_{\mathcal B} \stackrel{\ell\to\infty}\longrightarrow 0\,,\eq\end{itemize}
where  \bq \label{defB}\| {\rm
r}_n^{(\ell)}\|_{\mathcal B}:= \sup_{k\in\Z}\; \int_{{\mathbb  S}^{2N-1}}\int_{2^k \leq t \leq 2^{k+1}} \big|  {\rm
r}_n^{(\ell)}\big(t,\omega)\big|^2 \, d t \, d\omega\,.\eq
Furthermore  for any $\ell$, $({\rm
r}_n^{(\ell)})$ is unrelated to the scales $(h^{(j)})$ for  $j=1,\cdots,\ell$, and therefore \bq \label{ortogonall2} \|
\varphi_n\|_{L^2 (\R_+ \times {\mathbb  S}^{2N-1})}^2=\Sum_{j=1}^{\ell}\, \| g_n^{(j)}\|_{L^2 (\R_+ \times {\mathbb  S}^{2N-1})}^2+\|
{\rm r}_n^{(\ell)}\|_{L^2 (\R_+ \times {\mathbb  S}^{2N-1})}^2+\circ(1). \eq
\end{thm}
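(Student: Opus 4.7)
The statement is presented as an immediate consequence of \cite[Theorem 2.9]{G}, so my plan is to follow P.~G\'erard's iterative extraction scheme, adapted here to the group of dilations acting on the $t$-variable of $L^2(\R_+\times{\mathbb S}^{2N-1})$ and to the notion of negligibility furnished by the $\cB$-norm \eqref{defB}. Define $\sigma(v):=\limsup_{n\to\infty}\|v_n\|_\cB$. If $\sigma(\varphi)=0$, take $\ell=0$. Otherwise, for each $n$ choose $k_n\in\Z$ with
$$\int_{{\mathbb S}^{2N-1}}\int_{2^{k_n}}^{2^{k_n+1}}|\varphi_n(t,\omega)|^2\,dt\,d\omega\geq \tfrac{1}{2}\|\varphi_n\|_\cB,$$
set $h_n^{(1)}:=2^{k_n}$, and rescale $\wt\varphi_n(t,\omega):=\sqrt{h_n^{(1)}}\,\varphi_n(h_n^{(1)}t,\omega)$. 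These are bounded in $L^2$ and carry at least $\sigma(\varphi)/2+\circ(1)$ of mass in the fixed annulus $[1,2]\times{\mathbb S}^{2N-1}$. Up to extraction, $\wt\varphi_n\rightharpoonup \wt g^{(1)}$ weakly in $L^2$; the annular lower bound passes to the liminf so $\wt g^{(1)}\neq 0$. Set $g_n^{(1)}(t,\omega):=(h_n^{(1)})^{-1/2}\,\wt g^{(1)}(t/h_n^{(1)},\omega)$; the $h^{(1)}$-concentration is automatic since $\wt g^{(1)}\in L^2$ makes $\int_0^{1/R}\!+\int_R^\infty$ of $|\wt g^{(1)}|^2$ vanish as $R\to\infty$.

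The main technical point, and the obstacle one has to deal with carefully, is to show that $r_n^{(1)}:=\varphi_n-g_n^{(1)}$ satisfies $\sigma(r^{(1)})\leq \sigma(\varphi)-c\|\wt g^{(1)}\|_{L^2}^2$ for some $c>0$. If this were to fail, one could locate another sequence of dyadic annuli in $r^{(1)}_n$ carrying mass arbitrarily close to $\sigma(\varphi)$: annuli at the scale $h^{(1)}$ are excluded because weak convergence in $L^2$ has already extracted a definite fraction of mass at that scale, while annuli at scales asymptotically unrelated to $h^{(1)}$ produce, after rescaling and a further weak extraction, a nontrivial profile whose $L^2$ mass adds to $\|\wt g^{(1)}\|_{L^2}^2$, eventually contradicting the uniform $L^2$ bound. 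One then iterates the extraction on $r^{(1)}$ and produces $h^{(j)}$, $g^{(j)}$, $r^{(\ell)}$ by recursion. Orthogonality of scales, $|\log(h^{(j)}_n/h^{(k)}_n)|\to\infty$ for $j\neq k$, is enforced by a diagonal argument: whenever a newly extracted scale stays within bounded log-ratio of a previous one, the weak limit at the previous step would have captured the mass, contradicting the minimality of the residual $\sigma$.

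The fact that $\limsup_n\|r_n^{(\ell)}\|_\cB\to 0$ as $\ell\to\infty$ follows from the monotone decrease of $\sigma$ by $c\|\wt g^{(j)}\|_{L^2}^2$ at each step together with the $L^2$ summability $\sum_j\|g^{(j)}\|_{L^2}^2\leq \limsup_n\|\varphi_n\|_{L^2}^2$, which in turn gives $\|g^{(j)}\|_{L^2}\to 0$. Finally, the Pythagorean identity \eqref{ortogonall2} stems from combining the weak $L^2$ convergence used at each extraction with the scale orthogonality: after change of variables, the cross term $\langle g^{(j)}_n,g^{(k)}_n\rangle_{L^2}$ takes the form $\int\int \wt g^{(j)}(s,\omega)\,\overline{\wt g^{(k)}}\bigl((h_n^{(j)}/h_n^{(k)})s,\omega\bigr)\,ds\,d\omega$ (up to a factor $\sqrt{h_n^{(j)}/h_n^{(k)}}$), which vanishes as $n\to\infty$ either by dominated convergence or by density of compactly supported functions, while $\langle g^{(j)}_n,r^{(\ell)}_n\rangle_{L^2}\to 0$ because $r^{(\ell)}$ is unrelated to the scale $h^{(j)}$.
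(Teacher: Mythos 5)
Your scheme breaks at the very first extraction: after rescaling to the selected dyadic block you take a weak $L^2$ limit and assert that the lower bound on the mass in the fixed annulus $[1,2]\times{\mathbb S}^{2N-1}$ ``passes to the liminf'', so that $\wt g^{(1)}\neq 0$. Weak convergence gives no such lower bound: local $L^2$ mass can disappear in the weak limit through oscillation or further concentration inside the block. Take $\varphi_n(t,\omega)={\rm e}^{int}\psi(t)$ with a fixed $\psi$ supported in $[1,2]$; then $\|\varphi_n\|_{\mathcal B}\geq \int_1^2|\psi|^2\,dt>0$ for every $n$, your selection gives $h^{(1)}_n\equiv 1$ and $\wt\varphi_n=\varphi_n\rightharpoonup 0$, so $\wt g^{(1)}=0$, the claimed decrement $\sigma(r^{(1)})\leq\sigma(\varphi)-c\,\|\wt g^{(1)}\|_{L^2}^2$ is vacuous, and the iteration never decreases the $\mathcal B$-norm (while the theorem holds trivially here with $g^{(1)}_n=\varphi_n$, which is ${\bf 1}$-concentrated). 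For the same reason the later assertion that ``weak convergence in $L^2$ has already extracted a definite fraction of mass at that scale'' is false. A structural symptom of the problem is that your construction forces each $g^{(j)}_n$ to be an exact rescaling of a single fixed profile $\wt g^{(j)}$; this is strictly stronger than the theorem, whose components are only required to be bounded and $h^{(j)}$-concentrated, and it is false in general, as the same example shows. Modulated weak limits are the right device for the second stage of the paper (the extraction of cores in Lemma \ref{lem1}), not for the scale decomposition.

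Note also that the paper does not reprove this statement: it quotes it as an immediate consequence of Theorem 2.9 of \cite{G}, whose proof proceeds without weak limits. One selects, for the current remainder, a dyadic block $[2^{k_n},2^{k_n+1}]$ carrying at least half of its $\mathcal B$-norm, calls $h^{(j)}$ the associated scale, and defines $g^{(j)}_n$ as the restriction of the remainder itself to a window $\{2^{k_n}/R_n\leq t\leq 2^{k_n}R_n\}$ with $R_n\to\infty$ chosen slowly by a diagonal argument, so that $g^{(j)}_n$ is $h^{(j)}$-concentrated and captures asymptotically all the mass the remainder carries near that scale; the new remainder is then unrelated to the scale $h^{(j)}$, which also forces $|\log(h^{(j)}_n/h^{(k)}_n)|\to\infty$ for $j\neq k$ (otherwise the newly selected block would have vanishing mass, contradicting the lower bound by half of the $\mathcal B$-norm). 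Since each extracted piece satisfies $\limsup_n\|g^{(j)}_n\|_{L^2}^2\geq\frac12\limsup_n\|{\rm r}^{(j-1)}_n\|_{\mathcal B}$ and the pieces have asymptotically disjoint supports, these masses are summable, whence $\limsup_n\|{\rm r}^{(\ell)}_n\|_{\mathcal B}\to 0$ as $\ell\to\infty$, and \eqref{ortogonall2} follows from the near disjointness of supports rather than from any cross-term computation with fixed rescaled profiles.
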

 \section{Proof of Theorem \ref{main2}}\label{new}
\subsection{Scheme of the proof} The proof  relies on a diagonal  extraction process and is done in three steps. In the first
step, we  extract the $\log$-oscillating components of the sequence $(u_n)$ we investigate. For that purpose, we  reduce the problem to the study of the bounded sequence $(\varphi_n)$ in $L^2 (\R_+\times {\mathbb  S}^{2N-1})$ expressed in terms of $(u_n)$ outside the low frequencies as follows: 
$$\wh{  u_n}(\xi) = \frac 1 {|\xi|^{2N}}  \; { \varphi_n}(\log |\xi|, \omega).$$
Then we extract the concentrated components of the sequence $(\varphi_n)$ applying Theorem~\ref{oscomp}. This allows to complete this step expressing  $(u_n)$ by means of $(\varphi_n)$. \\

\noindent
The second step is dedicated to the extraction of the cores and the profiles. It   consists firstly  in applying  with a slight modification the processes developed in \cite{G},  secondly in  replacing the profiles obtained by this method in $L^2 (\R_+\times {\mathbb  S}^{2N-1})$ by their average on the sphere, and finally  in translating the result in terms of $(u_n)$. \\

\noindent
In the third step, we establish that the remainder term  tends to zero in the Orlicz space as the number of functions in the sum and $n$  go to infinity. As we shall see in the sequel,  the remainder term includes two parts. A first part which comes from the decomposition on  $\log$-oscillating components,  and that we deal thanks to the connection between  $\mathcal B$ space defined by  \eqref{defB} and  Orlicz space. Regarding the second part, it comes from the error committed by replacing  the profiles obtained in the second step by their average on the sphere. We treat it through some suitable  estimates.

\subsection{Extraction of $\log$-oscillating components}   In view of Assumption \eqref{main-assum3}, the  sequence $(u_n)$  converges towards $0$ in $L^2(\R^{2N})$. Therefore,  for any fixed $M$
$$ \int_{|\xi| \leq M} (1+|\xi|^2)^N \big|  \wh{  u_n} (\xi)\big|^2 d\xi \leq (1+M^2)^{N} \| u_n\|^2_{L^2(\R^{2N})} \stackrel{n\to\infty}\longrightarrow 0.$$
Thus writing \footnote{we recall that  $ \wh{ \chi (D)f} (\xi)= \chi (\xi)\wh{ f} (\xi)$.}
$$ u_n= \chi (D) u_n + (1-\chi) (D) u_n,$$
where $\chi$ is a radial function  in~$\mathcal{D}(\R^{2N})$ equal to one in $B(0,1)$ and valued in  $[0,1]$, we infer that 
\begin{equation} \label{dec} u_n= \wt u_n + {\rm
r}_n^{1},\end{equation}  where  $(\wt u_n)$ is a bounded sequence in $H^N(\R^{2N})$ the Fourier transform of which  is supported in $\ds \big\{\xi \in \R^{ 2 N }; |\xi| \geq 1 \big\}$ and $ \| {\rm
r}_n^{1}\|_{H^N(\R^{2N})}\stackrel{n\to\infty}\longrightarrow 0$.   \\

Our starting point consists  in observing that 
  \bq \label{equiv}  \wh{  \wt  u_n}(\xi) = \frac 1 {|\xi|^{2N}}  \;{ \varphi_n}(\log |\xi|, \omega),
 \eq
with ${ \varphi_n}$  a bounded sequence of $L^2 (\R_+\times {\mathbb  S}^{2N-1})$.  Indeed, we have 
\begin{eqnarray*} \|\wh{ |D|^N \wt  u_n}\|^2_{L^2 (\R^{2N})} &=&  \int_{{\mathbb  S}^{2N-1}} d  \omega \, \int^\infty_1 \big| { \varphi_n}  (\log \rho, \omega )\big|^2 \frac {d \rho}  {\rho}   \\ &=&  \int_{{\mathbb  S}^{2N-1}} d  \omega \, \int^\infty_0\big| { \varphi_n} (u, \omega )\big|^2 du, \end{eqnarray*}
which ensures the result.  Consequently, one can write 
\begin{equation} \label{rel1} u_n (x)=  \frac {1} {(2\pi)^{2N}}\int_{|\xi| \geq 1}\frac{ {\rm e}^{ i \, x \cdot \xi}}{|\xi|^{2N}}  \; { \varphi_n}  (\log |\xi|, \omega) \, d \xi +{\rm
r}_n^{1}(x)\,,  \end{equation} with  $({ \varphi_n})_{n \in \N}$   a bounded sequence in $L^2 (\R_+\times {\mathbb  S}^{2N-1})$. Taking advantage of the fact that 
 $\|\wt  u_n\|_{L^2 (\R^{2N})} \stackrel{n\to\infty}\longrightarrow 0$, we infer with the vocabulary of Remarks  \ref{vocabulary} that the sequence $( \varphi_n)_{n \in \N}$ is unrelated to the  scale ${\bf 1}$ and  to any scale $( h_n)_{n \in \N}$ tending to infinity.  Indeed, for any real numbers $b >a>0$
\begin{eqnarray*} \int_{{\mathbb  S}^{2N-1}} d  \omega \,\int_{a \leq  u \leq b}  |{ \varphi_n}(u,\omega)|^2 \, du &\leq& {\rm
e}^{ 2 N b} \int_{{\mathbb  S}^{2N-1}} d  \omega \,\int_{a \leq  u  \leq b}  |{ \varphi_n}(u,\omega)|^2 {\rm
e}^{- 2 N u}\, du \\ &\leq& {\rm
e}^{ 2 N b} \|\wt  u_n\|^2_{L^2 (\R^{ 2 N })}\leq {\rm
e}^{ 2 N b} \|  u_n\|^2_{L^2 (\R^{ 2 N })}\stackrel{n\to\infty}\longrightarrow 0.\end{eqnarray*}
Along the same lines, if $( h_n)_{n \in \N}$ is any scale  tending to infinity, then we have 
for any real numbers $b >a>0$ and $n$ large enough
\begin{eqnarray*} \int_{{\mathbb  S}^{2N-1}} d  \omega \,\int_{a \leq h_n u \leq b}  |{ \varphi_n}(u,\omega)|^2 \, du &\leq& {\rm
e}^{ 2 N b} \int_{{\mathbb  S}^{2N-1}} d  \omega \,\int_{0 \leq  u  \leq b}  |{ \varphi_n}(u,\omega)|^2 {\rm
e}^{- 2 N u}\, du \,,  \end{eqnarray*}
which  ensures the result.  \\

 \noindent Now to extract the  $\log$-oscillating components  of the sequence $(u_n)$,  we shall apply Theorem \ref{oscomp}    to  the sequence $(\varphi_n)$.  Up to a subsequence extraction, this gives rise to   
\bq \label{apger}\varphi_n(t,\omega)=\Sum_{j=1}^{\ell}\, \wt {g_n}^{(j)}(t,\omega)+\wt{\rm
t}_n^{(\ell)}(t,\omega), \eq
for $(t,\omega) \in \R_+\times {\mathbb  S}^{2N-1}$, where  the sequence  $(\wt {g_n}^{(j)})$ is $h_n^{(j)}$-concentrated, with for $j \neq k$ $\big|\log\big({ h^{(j)}_n}/h^{(k)}_n\big)\big|\stackrel{n\to\infty}\longrightarrow \infty$,   and where $({\wt{\rm
t}_n^{(\ell)}})$ is unrelated to any scale $(h_n^{(j)})$ for  $j=1,\cdots,\ell$,  and satisfies $$\limsup_{n\to\infty}\; \sup_{j\in\Z}\; \int_{{\mathbb  S}^{2N-1}} d  \omega \,\int_{2^j}^{2^{j+1}}\big|  {\wt{\rm
t}_n^{(\ell)}}\big(t,\omega \big)\big|^2 \, d t \stackrel{\ell\to\infty}\longrightarrow 0.$$ Taking advantage of the above, we deduce that the  components $\wt {g_n}^{(j)}$  intervening in Decomposition \eqref{apger} are $h_n^{(j)}$-concentrated  with $h_n^{(j)} \stackrel{n\to\infty}\longrightarrow 0$. \\

Therefore,  up to a subsequence extraction, we have 
 \bq \label{first}  u_n (x) = \Sum_{j=1}^{\ell}\, g_n^{(j)}(x)+{\rm
t}_n^{(\ell)}(x) + \circ(1)\,, \eq
where the Fourier transforms   of the sequences  $ ({\rm
t}_n^{(\ell)})$ and  $(g_n^{(j)})$ for $j \in \{1,\cdots,\ell\}$ are  supported in  $\ds \big\{\xi \in \R^{ 2 N }; |\xi| \geq 1 \big\}$ and satisfy 
 \bq \label{passage} \wh {g_n^{(j)}}(\xi) =  \frac 1 {|\xi|^{ 2 N }}  \; {\wt {g_n}^{(j)}}(\log |\xi|,\omega) \quad  \mbox{and} \quad  \wh {{\rm
t}_n^{(\ell)}}(\xi) =  \frac 1 {|\xi|^{ 2 N }}  {\wt {\rm
t}_n^{(\ell)}}(\log |\xi|,\omega)\,, \eq
and where $ \circ(1)$   tends to $ 0 $  in $H^N(\R^{2N})$ and thus in ${\mathcal
L}$ as $n$ goes to infinity.  Moreover, in light of   Proposition \ref{uspprop*} which relates the Orlicz and ${\mathcal B}$ norms  \begin{equation}   \limsup_{n\to\infty}\;\|{\rm
t}_n^{(\ell)}\|_{\mathcal L} \stackrel{\ell\to\infty}\longrightarrow 0\label{rel3}.\end{equation}

\noindent It is then obvious that  the sequences $(|D|^N g_n^{(j)})$ are $\alpha^{(j)}$ $\log $-oscillating in the sense of Definition \ref{oscilunreorlicz}, with $ \alpha^{(j)}_n = \frac 1  {h^{(j)}_n} \stackrel{n\to\infty}\longrightarrow \infty\,,$ and that $(|D|^N {\rm
t}_n^{(\ell)})$  is $\log$-unrelated to the scales $\underline{\alpha}^{(j)}$ for $j=1,\cdots,\ell$. Indeed, 
\begin{eqnarray*} \int_{\frac{1}{\alpha^{(j)}_n} (\log |\xi|) \geq R}  | |\xi|^N \, \wh {g_n^{(j)}}(\xi)|^2 \, d\xi &=&   \int_{{\mathbb  S}^{2N-1}} d  \omega \, \int_{\frac{1}{\alpha^{(j)}_n} (\log \rho ) \geq R}  | {\wt {g_n}^{(j)}}(\log \rho,\omega)|^2 \, \frac {d \rho}  {\rho} \\ &=&  \int_{{\mathbb  S}^{2N-1}} d  \omega \, \int_{\frac{\eta  }{\alpha^{(j)}_n}  \geq R}  | {\wt {g_n}^{(j)}}(\eta,\omega)|^2 \, d \eta\,. \end{eqnarray*}
The sequence ${\wt {g_n}^{(j)}}$  being  $\frac 1  {\alpha^{(j)}_n}$-concentrated, we deduce that  $$\limsup_{n\to\infty}\;  \int_{\frac{1}{\alpha^{(j)}_n} (\log |\xi|) \geq R}  | |\xi|^N \,  \wh{g_n^{(j)}}(\xi)|^2 \, d\xi   \stackrel{R\to\infty}\longrightarrow 0\,.$$
Along the same lines, we get 
$$\limsup_{n\to\infty}\;  \int_{\frac{1}{\alpha^{(j)}_n} (\log |\xi|) \leq \frac{1} R}  | |\xi|^N \,  \wh {g_n^{(j)}}(\xi)|^2 \, d\xi   \stackrel{R\to\infty}\longrightarrow 0\,,$$ and establish that $(|D|^N {\rm
t}_n^{(\ell)})$  is $\log$-unrelated to the scales $\underline{\alpha}^{(j)}$ for $j=1,\cdots,\ell$. This  achieves the proof of 
the claim. The fact that  $ \alpha^{(j)}_n = \frac 1  {h^{(j)}_n} $ easily implies that the  scales $(\underline{\alpha}^{(j)})$ are  pairwise
orthogonal  in the sense of \eqref{caseI}.
 
\subsection{Extraction of  cores and profiles}    
Now  our aim is to prove that the sequence $(g_n^{(j)})_{n \in \N }$ 
can be decomposed up to a subsequence extraction according to 
\begin{equation}  \label{hope} \sum_{l=1}^L    \frac {C_{N}} { \sqrt{\alpha_n^{(j)}}}\int_{\R^{ 2 N }}\frac{ {\rm e}^{ i \, (x-x^l_n) \cdot \xi}}{|\xi|^{ 2 N }}   \; {\phi^l} \Big(\frac {\log |\xi| } {\alpha^{(j)}_n} \Big) \, d \xi+\circ(1)\,, \end{equation}
where the profiles $ {\phi^l}$ belong to $ L^2(\R_+)$ and  the cores $(x_n^l)_{n \in \N }$ are sequences of points in $\R^{ 2 N }$ satisfying the orthogonality assumption \eqref{caseII},  and  where $ \circ(1)$  designates
a sequence which tends to $ 0 $ in ${\mathcal
L}$ as $n$ and $L$ go to infinity. \\

\noindent
For that purpose, let us firstly recall   that 
\begin{eqnarray*} g_n^{(j)}(x)  &=&\frac {1} { (2\pi)^{ 2 N }} \,  \int_{|\xi| \geq 1}\frac{ {\rm e}^{ i \, x \cdot \xi}}{|\xi|^{ 2 N }}   \; {\wt {g_n}^{(j)}}(\log |\xi|,\omega)\, d \xi,\end{eqnarray*}
where the sequence  $({\wt {g_n}^{(j)}} )$  is  $   \ds \frac 1  {\alpha^{(j)}_n}$-concentrated. Thus   
\begin{equation}  \label{start}  g_n^{(j)}(x) =    \frac {C_{N}} { \sqrt{\alpha_n^{(j)}}} \int_{|\xi| \geq 1}\frac{ {\rm e}^{ i \, x \cdot \xi}}{|\xi|^{ 2 N }}  \; {\varphi_n^{(j)}} \Big(\frac { \log |\xi|} {\alpha^{(j)}_n},\omega \Big) \, d \xi\,,\end{equation}
with $\varphi_n^{(j)}$ a bounded sequence in $L^2(\R_+\times {\mathbb  S}^{2N-1})$ which is  ${\bf 1}$-concentrated. \\

\noindent In order to establish  \eqref{hope}, we shall revisit the proof of Proposition 4.1 of   P. G\'erard  in \cite{G}. For that purpose, with a fixed scale $(\alpha_n)$, let us for a  sequence $(\varphi_n)$  bounded  in the space~$L^2(\R_+\times {\mathbb  S}^{2N-1})$ and  ${\bf 1}$-concentrated  denote  by ${\cP}(\varphi_n)$  the set of weak  limits  in $ L^2(\R_+\times {\mathbb  S}^{2N-1})$ of subsequences of 
$$  {\rm e}^{ i \, x_n \cdot  {\rm e}^{\alpha_n t}\omega} \;\;  {\varphi_n}(t,\omega),$$ 
where $x_n \in \R^{ 2 N }$ 
and set 
\begin{equation}  \label{defeta} {\mathcal \eta}( \varphi_n):= \sup_{\varphi \in {\cP}(\varphi_n)} \|\varphi\|^2_{L^2(\R_+\times {\mathbb  S}^{2N-1})}.\end{equation}
Now the heart of the 
matter consists in  establishing the  following lemma:
 \begin{lem}
\label{lem1}
Let $(\varphi_n)$ be a bounded sequence in $L^2(\R_+\times {\mathbb  S}^{2N-1})$, ${\bf 1}$-concentrated, and  $(\alpha_n)_{n \in \N}$ be  a sequence
of positive real numbers going to infinity. Then, up to a subsequence extraction,  there exist a sequence $( \Phi^{(l)} )_ {l \geq 1 }$ of functions belonging to  $ L^2(\R_+\times {\mathbb  S}^{2N-1})$ and  a sequence of cores $(\underline{x}^{(l)})_ {l \geq 1}$,   such that  for any $L\geq 1$  we have 
$$ \varphi_n (t, \omega)= \Sum_{l=1}^{L}  {\rm e}^{- i \, x^{(l)}_n \cdot  {\rm e}^{\alpha_n t}\omega} \;\Phi^{(l)}(t,\omega) +  {\rm r}^{(L)}_n(t,\omega)\,,$$ 
with
 \begin{enumerate}
    \item For all $1\leq  l'< l $, we have
  $$ - \frac{ \log|x^{(l)}_n- x^{(l^\prime)}_n|}{\alpha_n} \stackrel{n\to\infty}\longrightarrow a \in [-\infty,+\infty[  \quad \mbox{with in the case when}  \quad a > -\infty,  \,\,$$
$$  \Phi^{(l)}(t,\omega) = 0,  \quad \mbox{for} \quad t < a  \,.$$
  \item $\ds  \qquad  \qquad {\rm e}^{ i \, x^{(L)}_n \cdot  {\rm e}^{\alpha_n t}\omega}{\rm
r}_n^{(L)}(t,\omega) \; \stackrel{n\to\infty}\rightharpoonup 0  \quad \mbox{in} \quad L^2(\R_+\times {\mathbb  S}^{2N-1})\,. $\\
\item  For all $1\leq  l'< l $, the sequence 
$$  \qquad  \qquad {\rm e}^{ i \,( x^{(l)}_n- x^{(l')}_n) \cdot  {\rm e}^{\alpha_n t}\omega} \Phi^{(l)}(t,\omega) \; \rightharpoonup 0  \quad \mbox{in} \quad L^2(\R_+\times {\mathbb  S}^{2N-1}),\quad n\to\infty\,.$$
  \item $ \ds \qquad \quad \limsup_{L\to\infty} \;{\mathcal \eta}({\rm
r}_n^{(L)}) = 0.$
 \end{enumerate}
 \end{lem}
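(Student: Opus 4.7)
The plan is to construct the decomposition by an iterative extraction of weak limits under suitable phase shifts, following the strategy of Proposition~4.1 of P.~G\'erard in \cite{G}, with the adaptations required to handle the spatial localisation constraints (1) and (3).

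I would argue by induction on $L$. Setting $r_n^{(0)} := \varphi_n$ and $\eta_L := \eta(r_n^{(L)})$, suppose $\Phi^{(1)},\ldots,\Phi^{(L)}$ and $(x_n^{(l)})_{l\leq L}$ satisfying (1)--(3) up to step $L$ have been constructed. If $\eta_L = 0$ the procedure stops; otherwise, by the very definition of $\eta$ in \eqref{defeta}, one can choose a core $(x_n^{(L+1)})$ and extract a subsequence so that
$${\rm e}^{ix_n^{(L+1)}\cdot {\rm e}^{\alpha_n t}\omega}\, r_n^{(L)}(t,\omega) \rightharpoonup \Phi^{(L+1)}(t,\omega) \quad \text{in } L^2(\R_+\times{\mathbb S}^{2N-1}),$$
with $\|\Phi^{(L+1)}\|^2 \geq \eta_L/2$. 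One then sets $r_n^{(L+1)} := r_n^{(L)} - {\rm e}^{-ix_n^{(L+1)}\cdot {\rm e}^{\alpha_n t}\omega}\Phi^{(L+1)}$, so that property (2) at step $L+1$ holds by construction. A further diagonal extraction ensures that for every $l'\leq L$ the quantity $-\log|x_n^{(L+1)} - x_n^{(l')}|/\alpha_n$ converges to some $a_{l'} \in [-\infty,+\infty]$.

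The main obstacle, in my view, is the verification of (1), namely that $a_{l'} < +\infty$ and that $\Phi^{(L+1)}$ vanishes on $\{t < a_{l'}\}$ when $a_{l'}$ is finite. The key observation is that on $\{t < a_{l'}\}$ (understood as all of $\R_+$ when $a_{l'}=+\infty$), one has $|x_n^{(L+1)} - x_n^{(l')}|\,{\rm e}^{\alpha_n t} \to 0$ uniformly on compacts, so ${\rm e}^{i(x_n^{(L+1)} - x_n^{(l')})\cdot {\rm e}^{\alpha_n t}\omega} \to 1$ in $L^\infty_{loc}$. Consequently the local weak limit of ${\rm e}^{ix_n^{(L+1)}\cdot {\rm e}^{\alpha_n t}\omega}\, r_n^{(L)}$ on this region agrees with that of ${\rm e}^{ix_n^{(l')}\cdot {\rm e}^{\alpha_n t}\omega}\, r_n^{(L)}$. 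Decomposing
$${\rm e}^{ix_n^{(l')}\cdot {\rm e}^{\alpha_n t}\omega}\, r_n^{(L)} = {\rm e}^{ix_n^{(l')}\cdot {\rm e}^{\alpha_n t}\omega}\, r_n^{(l')} - \sum_{k=l'+1}^{L}{\rm e}^{i(x_n^{(l')}-x_n^{(k)})\cdot {\rm e}^{\alpha_n t}\omega}\Phi^{(k)},$$
the first term tends weakly to zero by property (2) at step $l'$, while each term of the sum does so by the rapid oscillation of its phase, which is a consequence of condition (1) already in force for $k$ relative to $l'$ (indeed $|x_n^{(k)}-x_n^{(l')}|\,{\rm e}^{\alpha_n t}\to\infty$ on the support of $\Phi^{(k)}$). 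Hence $\Phi^{(L+1)}=0$ on $\{t<a_{l'}\}$; in particular, $a_{l'}=+\infty$ would force $\Phi^{(L+1)}\equiv 0$, contradicting $\|\Phi^{(L+1)}\|^2 \geq \eta_L/2 > 0$, so $a_{l'}\in[-\infty,+\infty[$ as required.

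Condition (3) at step $L+1$ is then immediate: on $\{t<a_{l'}\}$ the profile $\Phi^{(L+1)}$ itself vanishes by (1), while on $\{t>a_{l'}\}$ the phase ${\rm e}^{i(x_n^{(L+1)}-x_n^{(l')})\cdot {\rm e}^{\alpha_n t}\omega}$ oscillates rapidly and therefore sends any fixed $L^2$ function to zero weakly. Finally, (4) follows from the orthogonality identity
$$\|r_n^{(L)}\|_{L^2}^2 = \|r_n^{(L-1)}\|_{L^2}^2 - \|\Phi^{(L)}\|_{L^2}^2 + \circ(1),$$
obtained by expanding the square and using the weak convergence defining $\Phi^{(L)}$. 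Summing over $L$ yields $\sum_{L\geq 1} \|\Phi^{(L)}\|_{L^2}^2 \leq \limsup_{n\to\infty} \|\varphi_n\|_{L^2}^2 < \infty$, so $\|\Phi^{(L)}\|_{L^2} \to 0$ as $L\to\infty$; combined with $\|\Phi^{(L+1)}\|^2 \geq \eta_L/2$ this forces $\eta_L \to 0$. A classical diagonal extraction then produces a single subsequence valid for all $L \geq 1$, completing the argument.
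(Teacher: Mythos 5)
Your construction is essentially the paper's own proof: the same G\'erard-style iterative extraction of weak limits of almost maximal mass under the phases ${\rm e}^{i x_n\cdot {\rm e}^{\alpha_n t}\omega}$ (so that Claim (2) holds by construction), the same phase-comparison mechanism for Claim (1) (the phase ${\rm e}^{i(x^{(l)}_n-x^{(l')}_n)\cdot {\rm e}^{\alpha_n t}\omega}$ tends to $1$ locally uniformly on $\{t<a\}$, so the weak convergence of Claim (2) forces $\Phi^{(l)}$ to vanish there, and $a=+\infty$ is excluded because $\|\Phi^{(l)}\|^2\geq \eta/2>0$), and the same derivation of Claim (4) from ${\mathcal \eta}({\rm r}^{(L)}_n)\leq 2\|\Phi^{(L+1)}\|^2_{L^2}$ together with the Bessel-type bound $\sum_l\|\Phi^{(l)}\|^2_{L^2}\lesssim \limsup_n\|\varphi_n\|^2_{L^2}$ coming from the orthogonality expansion. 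The only point where you are lighter than the paper is the weak vanishing of the oscillating terms ${\rm e}^{\pm i(x^{(l)}_n-x^{(l')}_n)\cdot {\rm e}^{\alpha_n t}\omega}\,\Phi^{(l)}$ used in Claim (3) and in your treatment of the cross terms for Claim (1), which you assert by ``rapid oscillation''; the paper proves it by reducing, via density, to test functions in ${\mathcal D}(]a,+\infty[\times{\mathbb S}^{2N-1})$, performing the change of variables $\xi={\rm e}^{\alpha_n t}\omega$ and integrating by parts to get the bound ${\rm e}^{-\alpha_n a_1}/\big(\alpha_n|x^{(l)}_n-x^{(l')}_n|\big)\to 0$ -- a routine non-stationary phase computation, so this is a matter of detail rather than a gap.
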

 \begin{rem}
Let us point out that  Claims (2) and (3) ensure that for all $l\leq L$, 
 ${\rm e}^{ i \, x^{(l)}_n \cdot  {\rm e}^{\alpha_n t}\omega}{\rm
r}_n^{(L)}(t,\omega) \; \stackrel{n\to\infty}\rightharpoonup 0  \quad \mbox{in} \quad L^2(\R_+\times {\mathbb  S}^{2N-1})\,$ and in particular,
   \begin{equation}   \label{ort*}\|
\theta \, \varphi_n \|_{L^2}^2=\Sum_{l=1}^{L}\, \| \theta \, \Phi^{(l)}\|_{L^2}^2+\|
\theta \,{\rm r}^{(L)}_n\|_{L^2}^2+\circ(1),\quad n\to\infty\,,  \end{equation}
for any  $\theta$ in  $ L^\infty(\R_+)$.
  \end{rem}
 \begin{proof} 
If $\ds {\mathcal \eta}(\varphi_n) =0$, we are done. Otherwise  there exists $ \ds  \Phi^{(1)} \in {\cP}(\varphi_n)$ such that 
$$  \|  \Phi^{(1)}\|_{L^2}^2 \geq \frac 1 2 \, {\mathcal \eta}(\varphi_n)\,. $$ 
Thus we have, up to a subsequence extraction
$$ \varphi_n(t,\omega)= {\rm e}^{- i \, x^{(1)}_n \cdot  {\rm e}^{\alpha_n t}\omega} \;\Phi^{(1)}(t,\omega) +  {\rm r}^{(1)}_n(t,\omega)\,,$$ 
with $(x^{(1)}_n)$  a core and 
\begin{equation}  \label{us*} {\rm e}^{ i \, x^{(1)}_n \cdot  {\rm e}^{\alpha_n t}\omega} \;  {\rm r}^{(1)}_n(t,\omega) \stackrel{n\to\infty}\rightharpoonup 0  \quad \mbox{in} \quad L^2(\R_+\times {\mathbb  S}^{2N-1})\,.\end{equation}
 Now  if $\ds {\mathcal \eta}({\rm r}^{(1)}_n) =0$, we stop the process. If not there exists $ \Phi^{(2)} \in {\cP}(\varphi_n)$ such that 
$$  \|\Phi^{(2)}\|_{L^2}^2 \geq \frac 1 2\,{\mathcal \eta}( {\rm r}^{(1)}_n)\,,$$ 
which, up to a subsequence extraction, gives rise to 
$$ {\rm r}^{(1)}_n(t,\omega)= {\rm e}^{- i \, x^{(2)}_n \cdot  {\rm e}^{\alpha_n t}\omega} \;\Phi^{(2)}(t,\omega) +  {\rm r}^{(2)}_n(t,\omega)\,,$$ 
for some core $(x^{(2)}_n)$,  and with
$$ {\rm e}^{ i \, x^{(2)}_n \cdot  {\rm e}^{\alpha_n t}\omega} \;  {\rm r}^{(2)}_n(t,\omega) \stackrel{n\to\infty}\rightharpoonup 0  \quad \mbox{in} \quad L^2(\R_+\times {\mathbb  S}^{2N-1})\,.$$
 Up to a subsequence extraction, we can suppose that 
 $$- \frac{ \log|x^{(1)}_n- x^{(2)}_n|}{\alpha_n} \stackrel{n\to\infty}\longrightarrow a \in [-\infty,+\infty]\,.$$ We claim in the case when $a > -\infty$ that  $  \Phi^{(2)}(t)=0$ for all $t<a$. Indeed, if not there exists $\wt a < a$ such that 
\begin{equation}  \label{abs1} \int^{\wt a}_ {-\infty}  \int_{{\mathbb  S}^{2N-1}}\big|  \Phi^{(2)}(t,\omega)\big|^2 \, d t  \, d  \omega = \delta >0\,.\end{equation}
By hypothesis if $\epsilon >0$ is chosen so that $\wt a  + \epsilon < a$, then 
\begin{equation}  \label{us1}  - \frac{ \log|x^{(1)}_{n}- x^{(2)}_{n}|}{\alpha_{n}} \geq \wt a  + \epsilon, \quad \mbox{for} \quad n \quad \mbox{sufficiently large} \,.\end{equation}
\noindent
 But  by construction, we have 
 $$ \int^{\wt a}_ {-\infty}  \int_{{\mathbb  S}^{2N-1}}\big|  \Phi^{(2)}(t,\omega)\big|^2 \, d t  \,d  \omega = \lim_{n\to\infty}\; {\mathcal
I}_{n}\,,$$ 
 with
 $${\mathcal
I}_{n} :=  \int^{+\infty}_ {-\infty} \int_{{\mathbb  S}^{2N-1}} {\rm e}^{i \, x^{(2)}_{n} \cdot  {\rm e}^{\alpha_{n} t}\omega} {\rm r}^{(1)}_{n}(t, \omega )\; \chi_{[0,\wt a]}(t) \overline { \Phi^{(2)}}(t,\omega)\, d t  \, d  \omega\,.$$ 
Moreover \begin{eqnarray*}
{\mathcal
I}_{n}&=&  \int^{+\infty}_ {-\infty}  \int_{{\mathbb  S}^{2N-1}}{\rm e}^{i \, x^{(1)}_{n} \cdot  {\rm e}^{\alpha_{n} t}\omega} {\rm r}^{(1)}_{n}(t, \omega )\;  {\rm e}^{i \, (x^{(2)}_{n}-x^{(1)}_{n}) \cdot  {\rm e}^{\alpha_{n} t}\omega}\chi_{[0,\wt a]}(t) \overline {\ \Phi^{(2)}}(t,\omega)\, d t  \,d  \omega \\ &=&  \int^{+\infty}_ {-\infty}  \int_{{\mathbb  S}^{2N-1}} {\rm e}^{i \, x^{(1)}_{n} \cdot  {\rm e}^{\alpha_{n} t}\omega} {\rm r}^{(1)}_{n}(t, \omega )\;  \chi_{[0,\wt a]}(t) \overline { \Phi^{(2)}}(t,\omega)\, d t  \,d  \omega + {\mathcal
R}_n\,,
  \end{eqnarray*}
  with
  \begin{eqnarray*}
\big| {\mathcal
R}_{n}\big|&\leq &  \int^{\wt a}_ {-\infty}  \int_{{\mathbb  S}^{2N-1}}\big| {\rm r}^{(1)}_{n}(t, \omega )\big|\;  \big| \Phi^{(2)}(t,\omega)\big| \, \big|x^{(2)}_{n}-x^{(1)}_{n}\big| \,  {\rm e}^{\alpha_{n} t} d t  \,d  \omega \\ &\lesssim&  \big|x^{(2)}_{n}-x^{(1)}_{n}\big| \,  {\rm e}^{\alpha_{n} \wt a}\,.
  \end{eqnarray*} 
By virtue of \eqref{us1}
$$ \big|x^{(1)}_{n}- x^{(2)}_{n}\big| \leq {\rm e}^{- \, \alpha_{n} (\wt a  + \epsilon)}, \quad \mbox{for} \quad n \quad \mbox{sufficiently large},$$
which   implies that
$$\big| {\mathcal
R}_{n}\big| \lesssim {\rm e}^{ \,- \alpha_{n}  \epsilon} \stackrel{n\to\infty}\longrightarrow 0\,.$$
This in light of  \eqref{us*} yields a contradiction with \eqref{abs1} and  then concludes the proof of the orthogonality property for $\Phi^{(2)}$.  According to the fact that by construction $\Phi^{(2)}$ is not null, we deduce that $a< +\infty$.\\

\noindent We next address Claim (3), which is in fact a direct consequence of Claim (2).
Indeed,  
in view of the orthogonality property for $\Phi^{(2)}$, it suffices to demonstrate  in the case when $a < +\infty$ that  for all  $\ds g$ in  $ {\mathcal
D} (]a,+\infty[ \times {\mathbb  S}^{2N-1})$ 
  \begin{equation}   \label{comp}{\mathcal
J}_{n} :=  \int^{+\infty}_ {a}  \int_{{\mathbb  S}^{2N-1}}  {\rm e}^{i \, (x^{(1)}_{n}-x^{(2)}_{n}) \cdot  {\rm e}^{\alpha_{n} t}\omega} \,g(t,\omega)\, d t  \,d  \omega\stackrel{n\to\infty}\longrightarrow 0\,.\end{equation}
\noindent
For that purpose,  we  perform the change of variables $\xi= {\rm e}^{\alpha_n t}\cdot \omega$ which leads to   
$$
 {\mathcal
J}_{n}  =  \frac 1 {\alpha_{n}}   \int_{\R^{2N}}  {\rm e}^{i \, (x^{(1)}_{n}-x^{(2)}_{n}) \cdot \xi} \,g\Big(\frac {\log |\xi| } {\alpha_n},\frac { \xi} {|\xi|} \Big)\, \frac {d \xi} {|\xi|^{2N}} \,,$$
   where $\ds  g\Big(\frac {\log |\xi| } {\alpha_n},\frac { \xi} {|\xi|} \Big)$ is supported in $\ds \Big\{ {\rm e}^{\alpha_n a_1}\leq |\xi| \leq {\rm e}^{\alpha_n b_1} \Big\} $, with $a <  a_1 < b_1 < \infty$. \\ 

 \noindent
Then integrating by parts, we get  
   $$
 {\mathcal
J}_{n}  =  \frac i {\alpha_{n}}   \int_{\R^{2N}}  {\rm e}^{i \, (x^{(1)}_{n}-x^{(2)}_{n}) \cdot \xi} \, \frac {x^{(1)}_{n}-x^{(2)}_{n}} {|x^{(1)}_{n}-x^{(2)}_{n}|^{2}}\cdot \nabla_\xi \, \left[  \frac {1} {|\xi|^{2N}}g\Big(\frac {\log |\xi| } {\alpha_n},\frac { \xi} {|\xi|} \Big)\right] \, d \xi\,,$$
which implies that
 \begin{eqnarray*}
\Big|  {\mathcal
J}_{n}\Big| & \lesssim & \frac 1 {\alpha_{n}} \int^{ {\rm e}^{\alpha_n b_1}}_{ {\rm e}^{\alpha_n a_1}} \frac {1} {|x^{(1)}_{n}-x^{(2)}_{n}|} \, \frac {\rho^{2N-1}} {\rho^{2N+1} }\, d \rho \\ & \lesssim &  \frac {{\rm e}^{-\alpha_n a_1}} {\alpha_{n} \, |x^{(1)}_{n}-x^{(2)}_{n}|} \,\stackrel{n\to\infty}\longrightarrow 0\,,
    \end{eqnarray*} 
   and ensures the result. \\ 

 \noindent
 An  iteration  argument  allows to construct the families   $(\Phi^{(l)} )_ {l \geq 1 }$  and  $(\underline{x}^{(l)})_ {l \geq 1}$ satisfying  Claims $(1)$, $(2)$ and $(3)$   of Lemma \ref{lem1}. Finally, recalling that by  construction  $${\mathcal \eta}({\rm
r}_n^{(L)}) \leq 2 \, \| \Phi^{(L+1)}\|_{L^2}^2\,,$$  we deduce in view of  the convergence of the series $\Sum_{l \in \N} \| \Phi^{(l)}\|_{L^2}^2$ that  
$$ \ds \qquad \quad \limsup_{L\to\infty} \;{\mathcal \eta}({\rm
r}_n^{(L)}) = 0\,.$$
This ends the proof of the lemma. 
  \end{proof}
   
   \medbreak
   
   \begin{cor}  Under  notations of Lemma \ref{lem1}, we have 
   \begin{equation}  \label{heart} \limsup_{n\to\infty}\;\|R^{(L)}_n\|_{\mathcal
L}\stackrel{L\to\infty}\longrightarrow 0\,,\end{equation}
 where   $$ \ds R_n^{(L)}(x):=  
  \frac {C_{N}} { \sqrt{ \alpha_n}} \int_{|\xi| \geq 1}\frac{ {\rm e}^{ i \, x \cdot \xi}}{|\xi|^{ 2 N }}  \; {{\rm
r}_n^{(L)} } \Big(\frac { \log |\xi|} {\alpha_n},\omega \Big) \, d \xi.$$
   \end{cor}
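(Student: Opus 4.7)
The plan is to argue by contradiction, using Claim (4) of Lemma \ref{lem1}, namely $\limsup_{L\to\infty}\eta({\rm r}_n^{(L)}) = 0$. First, performing the polar change of variable $\xi = e^{\alpha_n t}\omega$ in the defining integral of $R_n^{(L)}$ gives
\begin{equation*}
R_n^{(L)}(x) = C_N\sqrt{\alpha_n}\int_{\R_+\times\mathbb S^{2N-1}} e^{i\,x\cdot e^{\alpha_n t}\omega}\,{\rm r}_n^{(L)}(t,\omega)\,dt\,d\omega,
\end{equation*}
which identifies $R_n^{(L)}(y_n)/(C_N\sqrt{\alpha_n})$ with a pairing of ${\rm r}_n^{(L)}$ against a modulation of the form entering the definition \eqref{defeta} of $\eta$. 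Since $({\rm r}_n^{(L)})_n$ is ${\bf 1}$-concentrated (as a linear combination of ${\bf 1}$-concentrated sequences produced in the iterative construction of Lemma \ref{lem1}), localizing in $t$ by an $L^2$-cutoff $\chi_R$ supported where the mass sits --- with error vanishing as $R\to\infty$ uniformly in $n$ and $L$ --- and passing to a weak $L^2(\R_+\times\mathbb S^{2N-1})$-limit of $e^{i\,y_n\cdot e^{\alpha_n t}\omega}\,{\rm r}_n^{(L)}(t,\omega)$ along any chosen sequence of centres $(y_n)$, yields the uniform bound
\begin{equation*}
\limsup_{n\to\infty}\sup_{(y_n)}\frac{|R_n^{(L)}(y_n)|^2}{\alpha_n}\lesssim \eta({\rm r}_n^{(L)}).
\end{equation*}

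Suppose, for a contradiction, that $\|R_{n_k}^{(L_k)}\|_{\mathcal L}\geq 2\delta>0$ along sequences $n_k,L_k\to\infty$. The central task is to extract from this Orlicz lower bound a concentration point $y_{n_k}\in\R^{2N}$ satisfying $|R_{n_k}^{(L_k)}(y_{n_k})|\gtrsim \delta\sqrt{\alpha_{n_k}}$. This step is the technical heart of the proof, where the rigidity of Moser profiles enters decisively: the sharpness of the Moser--Trudinger constant $\beta_N$ in \eqref{Mos2}, combined with the explicit Orlicz norm formula \eqref{profileN} for Moser-type profiles, forces any nontrivial Orlicz mass in the regime $\alpha_n\to\infty$ to arise from a concentrating spike of height $\sim\sqrt{\alpha_n}$ at scale $\sim e^{-\alpha_n}$. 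The compactness-at-infinity hypothesis \eqref{main-assum3} (inherited by $R_n^{(L)}$ through the orthogonality estimate \eqref{ortogonal}) confines the centre of such a spike to a bounded region of $\R^{2N}$, so a valid $y_{n_k}$ is produced.

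Combining the two steps yields $\eta({\rm r}_{n_k}^{(L_k)})\gtrsim \delta^2$ for all large $k$, in direct contradiction with Claim (4) of Lemma \ref{lem1} since $L_k\to\infty$. The main obstacle is the quantitative extraction in the second step: one must show that, asymptotically as $\alpha_n\to\infty$, only Moser-type concentrations can sustain the exponential Orlicz norm, so that $\|\cdot\|_{\mathcal L}\geq 2\delta$ forces the function to take a value of order $\delta\sqrt{\alpha_n}$ at some point. Once this is in hand, the remaining manipulations --- polar representation, weak compactness, and reading off the contradiction --- are routine.
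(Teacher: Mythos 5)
Your first step is sound and coincides with the paper's key estimate: after the polar change of variables, a point value of $R_n^{(L)}$ (localized in $t$ by a cutoff $\chi_{R_0}$) is a pairing of ${\rm e}^{i\,y_n\cdot{\rm e}^{\alpha_n t}\omega}\,{\rm r}_n^{(L)}$ against $\chi_{R_0}\in L^2$, and a weak-limit argument gives exactly the bound $\limsup_n\frac1{\sqrt{\alpha_n}}\|\wt R^{(L)}_n\|_{L^\infty}\le \eta({\rm r}_n^{(L)})^{1/2}\|\chi_{R_0}\|_{L^2}$, which is \eqref{est**} in the paper.

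The genuine gap is your second step. You assert that $\|R_{n}^{(L)}\|_{\mathcal L}\ge 2\delta$ forces a point where $|R_{n}^{(L)}|\gtrsim\delta\sqrt{\alpha_{n}}$, and you propose to justify this by the sharpness of the constant $\beta_N$ in \eqref{Mos2}, by the formula \eqref{profileN}, and by the compactness hypothesis \eqref{main-assum3}. None of these applies here: \eqref{profileN} is a statement about the specific elementary concentrations built from a single profile, not about an arbitrary remainder; \eqref{main-assum3} concerns $u_n$ and there is no argument that it is ``inherited'' by $R_n^{(L)}$; and no rigidity theorem in the paper says that a bounded $H^N$-sequence with non-vanishing Orlicz norm must have sup of order $\sqrt{\alpha_n}$ --- indeed a fixed nonzero bounded function has positive Orlicz norm and bounded sup, so such a claim can only hold after the non-concentrated part of ${\rm r}_n^{(L)}$ (the region $t\le 1/R_0$ or $t\ge R_0$) has been removed, a reduction your Orlicz argument never performs. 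The paper closes precisely this gap by elementary means: the tails are split off and estimated in $\mathcal L$ through the embedding \eqref{embedorliczNN} together with \eqref{ort*}--\eqref{eqeq}, which make them $\varepsilon$-small; and for the localized piece $\wt R_n^{(L)}$, whose Fourier support lies in $|\xi|\ge {\rm e}^{\alpha_n/(2R_0)}$, the weight $|\xi|^{-2N}$ yields the exponentially small $L^2$ bound $\|\wt R_n^{(L)}\|_{L^2}^2\lesssim {\rm e}^{-N\alpha_n/R_0}$, which fed into the elementary inequality \eqref{est***} together with \eqref{est**} gives directly $\limsup_n\|\wt R_n^{(L)}\|_{\mathcal L}\le C\,\eta({\rm r}_n^{(L)})^{1/2}$, and then Claim (4) of Lemma \ref{lem1} concludes. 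It is this exponential $L^2$ smallness --- absent from your proposal --- that replaces the ``rigidity of Moser profiles'' you invoke; without it, the implication from an Orlicz lower bound to a pointwise value of size $\delta\sqrt{\alpha_n}$ is unproven (and, for the full $R_n^{(L)}$ including its tails, false).
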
  
     \begin{proof}
   The goal is  to prove that  for all  $\varepsilon > 0$, there exist  $L_0 \in \N$ and $C$ an absolute constant such that
$$ \forall L \geq L_0,  \quad \limsup_{n\to\infty}\; \|R^{(L)}_n\|_{\mathcal L} \leq C\, \varepsilon\,.$$
Recall that  $\varphi_n$ is a bounded sequence in $L^2(\R_+\times {\mathbb  S}^{2N-1})$ which is  ${\bf 1}$-concentrated, and therefore  there exists $R_0>0$  such that 
\begin{equation}
\label{uskey} \limsup_{n\to\infty}\; \Big( \int_{{\mathbb  S}^{2N-1}} \int^{\frac 1 {R_0 }}_0 |\varphi_n(t,\omega)|^2  \, dt  \, d\omega+\int_{{\mathbb  S}^{2N-1}} \int^{\infty}_{R_0 } |\varphi_n(t,\omega)|^2  \, dt  \, d\omega \Big)\leq \varepsilon^2\,. \end{equation} 
Since by Assertion  \eqref{ort*}, we have for any $L \geq 1$
$$  \int_{{\mathbb  S}^{2N-1}} \int_{\{t \leq \frac 1 {R_0 }\} \cup \{t \geq R_0\}} | {\rm r}^{(L)}_n(t,\omega)|^2  \, dt  \, d\omega  \leq  \int_{{\mathbb  S}^{2N-1}} \int_{\{t \leq \frac 1 {R_0 }\} \cup \{t \geq R_0\}} |\varphi_n(t,\omega)|^2  \, dt  \, d\omega +\circ(1),$$ 
as $n$ tends to infinity, we obtain for any $L \geq 1$
\begin{equation}  \label{eqeq} \limsup_{n\to\infty}\;\int_{{\mathbb  S}^{2N-1}} \int_{\{t \leq \frac 1 {R_0 }\} \cup \{t \geq R_0\}} | {\rm r}^{(L)}_n(t,\omega)|^2  \, dt  \, d\omega  \leq  \varepsilon^2\,.\end{equation}
Now let us decompose $R^{(L)}_n$ into two parts as follows:  
$$R^{(L)}_n= \wt R^{(L)}_n + (R^{(L)}_n-\wt {R}^{(L)}_n)\,, $$  
 with
 $$\wt R^{(L)}_n(x):=   \frac {C_{N}} { \sqrt{ \alpha_n}} \int_{|\xi| \geq 1 }\frac{ {\rm e}^{ i \, x \cdot \xi}}{|\xi|^{ 2 N }} \, \chi_{R_0 }\Big(\frac { \log |\xi|} {\alpha_n} \Big)\,{  {\rm r}^{(L)}_n}\Big(\frac { \log |\xi|} {\alpha_n},\omega \Big)\, d \xi\,$$
and  where $\chi_{R_0 }$ is a  function  of~$\mathcal{D}(\R)$ which is equal to one in $\ds \Big\{\frac 1 {R_0 } \leq y \leq R_0\Big\}$, supported in~$\ds \Big\{\frac 1 {2 R_0 } \leq y \leq 2 R_0\Big\}$ and valued in  $[0,1]$. \\
 
 \noindent First let us  consider $\ds R^{(L)}_n-\wt {R}^{(L)}_n$.  By straightforward computations, we get 
$$ \|R^{(L)}_n-\wt {R}^{(L)}_n\|_{\mathcal L} \lesssim \|R^{(L)}_n-\wt {R}^{(L)}_n\|_{H^{N}(\R^{2N})} \lesssim \| \big(1-\chi_{R_0 }\big)  {\rm r}^{(L)}_n\|_{L^{2}}\,.$$  
Since
$$ \| \big(1-\chi_{R_0 }\big)  {\rm r}^{(L)}_n\|^2_{L^{2}}  \leq \int_{\{t \leq \frac 1 {R_0 }\} \cup \{t \geq R_0\}} | {\rm r}^{(L)}_n(t,\omega)|^2  \, dt  \, d\omega\,, $$ 
we deduce in light of \eqref{eqeq} that 
   $$  \limsup_{n\to\infty}\; \|R^{(L)}_n-\wt {R}^{(L)}_n\|_{\mathcal L}\lesssim \varepsilon. $$ 
Now let us  address $ \wt R^{(L)}_n$. To go to this end, let us  start by observing that  for any $L  \geq 1$
\begin{equation}  \label{est**}
 \limsup_{n\to\infty}\;  \frac 1 {\sqrt{\alpha_n} }  \,\| \wt R^{(L)}_n\|_{L^\infty(\R^{2N})} \leq {\mathcal \eta}( {\rm r}^{(L)}_n)^{\frac 1 2} \| \chi_{R_0 }\|_{L^2}\,.\end{equation}
Indeed, if not there exist an integer $L_0$,  a positive real number $\delta $  and  a subsequence $(\wt R^{(L_0)}_{n_k})$ such that 
$$ \frac 1 {\sqrt{\alpha_{n_k}} }  \,\| \wt R^{(L_0)}_{n_k}\|_{L^\infty(\R^{2N})} \geq {\mathcal \eta}( {\rm r}^{(L_0)}_{n_k})^{\frac 1 2} \| \chi_{R_0 }\|_{L^2} + \delta\,,$$
for $k$ sufficiently large.  Therefore, there exists a sequence of points $(x_{n_k})$ such that $$\frac 1 {\sqrt{\alpha_{n_k}} }  \, | \wt R^{(L_0)}_{n_k} (x_{n_k})| \geq  {\mathcal \eta}({\rm r}^{(L_0)}_{n_k})^{\frac 1 2}\,\| \chi_{R_0 }\|_{L^2} + \frac \delta 2\,,$$
which means that 
$$  \Big| \int_{\R_+} \int_{{\mathbb  S}^{2N-1}} \chi_{R_0 } (t)    {\rm e}^{ i \, x_{n_k} \cdot  {\rm e}^{\alpha_{n_k} t}\omega}   \, { {\rm r}^{(L_0)}_{n_k}} \Big(t,\omega \Big)\, dt \, d \omega\,\Big|  \geq  {\mathcal \eta}({\rm r}^{(L_0)}_{n_k})^{\frac 1 2}\,\| \chi_{R_0 }\|_{L^2} + \frac \delta 2\,,$$ for $k$  big enough.
But the sequence $( \ds {\rm e}^{ i \, x_{n_k} \cdot  {\rm e}^{\alpha_{n_k} t}\omega}   \,  { {\rm r}^{(L_0)}_{n_k}})$ is bounded in $L^2(\R_+\times {\mathbb  S}^{2N-1})$. Thus, up to a subsequence extraction, it converges weakly in $L^2$ to a function $H$ belonging to  ${\cP}( {\rm r}^{(L_0)}_{n_k})$. Passing to the limit, we deduce that 
$$ {\mathcal \eta}({\rm r}^{(L_0)}_{n_k})^{\frac 1 2}\,\| \chi_{R_0 }\|_{L^2} + \frac \delta 2 \leq \Big| \int_{\R_+} \int_{{\mathbb  S}^{2N-1}} \chi_{R_0 } (t)   H \Big(t,\omega \Big)\, dt \, d \omega\,\Big|  \leq \| H\|_{L^2} \| \chi_{R_0 }\|_{L^2}\,,$$  
which contradicts the definition of ${\mathcal \eta}( {\rm r}^{(L_0)}_{n_k})$ and ends the proof of Claim \eqref{est**}. \\
 
\noindent Now using the simple fact
that for any  function $u$
$$  \int_{\R^{2N}}\;\Big({\rm
e}^{|u(x)|^2}-1\Big)\,dx\, \lesssim  \int_{\R^{2N}}\;{\rm
e}^{|u(x)|^2}\,|u(x)|^2 \, dx,$$ 
we get for any $ \lambda > 0$ 
\begin{equation}  \label{est***}
  \int_{\R^{2N}}\;\Big({\rm
e}^{|\frac{\wt  R^{(L)} _n(x)}{\lambda}|^2}-1\Big)\,dx\, \lesssim \frac{1} {\lambda^2}\, {\rm e}^{\frac{\|\wt  R^{(L)} _n\|^2_{L^\infty}}{\lambda^2}} \|\wt  R^{(L)} _n\|^2_{L^2(\R^{2N})}\, .\end{equation} 
But 
$$ \|\wt  R^{(L)} _n\|^2_{L^2(\R^{2N})} \lesssim \int_{{\mathbb  S}^{2N-1}} \int^{2 R_0 }_{\frac 1 {2R_0 }} {\rm e}^{-2N  \alpha_n t}\, | { {\rm r}^{(L)}_n}(t,\omega)|^2  \, dt \, d\omega \lesssim  {\rm e}^{- \frac{ N  \alpha_n} {R_0}}\, \| { {\rm r}^{(L)}_n}\|^2_{L^2(\R_+\times {\mathbb  S}^{2N-1})},$$
which together with  \eqref{est**} and  \eqref{est***} implies that 
$$ \limsup_{n\to\infty}\;  \| \wt R^{(L)}_n\|_{\mathcal L} \leq C  {\mathcal \eta}({\rm r}^{(L)}_{n})^{\frac 1 2}\,.$$
The fact that $\ds  {\mathcal \eta}({\rm r}^{(L)}_{n})\stackrel{L\to\infty}\longrightarrow 0$  allows to conclude  the proof of  \eqref{heart}.

    \end{proof}
   
   \medbreak
   
\noindent  In order to obtain a decomposition under the form  \eqref{decompN},  we shall replace the profiles obtained above in $L^2 (\R_+\times {\mathbb  S}^{2N-1})$ by their average on the sphere. To go to this end, let us point out that if we set  
$$ \Phi_n^l(x):=   \frac {C_{N}} { \sqrt{ \alpha_n}}\int_{\R^{ 2 N }}\frac{ {\rm e}^{ i \, (x-x^l_n) \cdot \xi}}{|\xi|^{ 2 N }}   \;\Big[ {\Phi^{(l)}} \Big(\frac {\log |\xi| } {\alpha_n}, \omega \Big)- {\phi^l} \Big(\frac {\log |\xi| } {\alpha_n} \Big)\Big] \, d \xi\,,$$ 
where $\ds  {\phi^l}(t):= \frac {1} {\omega_{2N-1}} \int_{{\mathbb  S}^{2N-1}}  \; {\Phi^{(l)}} (t, \omega ) \, d  \omega$, 
then we have
\begin{equation}  \label{heart1} \;\|\Phi^l_n\|_{\mathcal
L}\stackrel{n\to\infty}\longrightarrow 0\,.\end{equation}
Assertion  \eqref{heart1} stems from  the following lemma:
 \begin{lem}
\label{avlem}
Let  $(\alpha_n)_{n \in \N}$ be a sequence
of positive real numbers going to infinity, $  {\Phi}$ be  a function in $L^2(\R_+\times {\mathbb  S}^{2N-1})$ and set for $x\in \R^{ 2 N }$ 
$$ \Phi_n(x):=  \frac {1} { (2\pi)^{ 2 N }} \sqrt{ \frac {1} { \alpha_n}}\int_{\R^{ 2 N }}\frac{ {\rm e}^{ i \,x \cdot \xi}}{|\xi|^{ 2 N }}   \;\Big[ {\Phi} \Big(\frac {\log |\xi| } {\alpha_n}, \omega \Big)- {\phi} \Big(\frac {\log |\xi| } {\alpha_n} \Big)\Big] \, d \xi\,,$$ 
where
\begin{equation}  \label{condav} {\phi}(t):= \frac {1} {\omega_{2N-1}} \int_{{\mathbb  S}^{2N-1}}  \; {\Phi} (t, \omega ) \, d  \omega.\end{equation}
Then
\begin{equation}  \label{heart2} \;\|\Phi_n\|_{\mathcal
L}\stackrel{n\to\infty}\longrightarrow 0\,.\end{equation}
 \end{lem}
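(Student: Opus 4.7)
The plan is to combine a density argument with a stationary-phase-type $L^\infty$ bound that exploits the vanishing spherical mean. Set $\Psi(t,\omega):=\Phi(t,\omega)-\phi(t)$, which belongs to $L^2(\R_+\times\mathbb{S}^{2N-1})$ and satisfies $\int_{\mathbb{S}^{2N-1}}\Psi(t,\omega)\,d\omega=0$ for a.e.\ $t$. Passing to spherical coordinates $\xi=\rho\omega$ and substituting $\rho=e^{\alpha_n t}$, I would rewrite
\[
\Phi_n(x)=\frac{\sqrt{\alpha_n}}{(2\pi)^{2N}}\int_0^\infty\!\int_{\mathbb{S}^{2N-1}}e^{ie^{\alpha_n t}x\cdot\omega}\,\Psi(t,\omega)\,d\omega\,dt.
\]

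Given $\eps>0$, by density pick $\Psi_\eps\in C_c^\infty((0,\infty)\times\mathbb{S}^{2N-1})$ with vanishing spherical average in $\omega$ and $\|\Psi-\Psi_\eps\|_{L^2}<\eps$, and split $\Phi_n=\Phi_n^\eps+\Phi_n^{\rm res}$ accordingly. A Plancherel computation of the type already carried out in the preceding corollary gives $\||D|^N\Phi_n^{\rm res}\|_{L^2}+\|\Phi_n^{\rm res}\|_{L^2}\lesssim\|\Psi-\Psi_\eps\|_{L^2}$, so the Sobolev embedding $H^N(\R^{2N})\hookrightarrow\cL$ yields $\|\Phi_n^{\rm res}\|_{\cL}\lesssim\eps$ uniformly in $n$. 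It then remains to prove that $\|\Phi_n^\eps\|_{\cL}\to 0$.

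The heart of the argument is the uniform estimate $\|\Phi_n^\eps\|_{L^\infty}\leq C_\eps/\sqrt{\alpha_n}$. With $\eta:=e^{\alpha_n t}|x|$ and $\nu:=x/|x|$, set $I_n(t,x):=\int_{\mathbb{S}^{2N-1}}e^{i\eta\omega\cdot\nu}\Psi_\eps(t,\omega)\,d\omega$. For small $\eta$ the mean-zero condition allows me to replace $e^{i\eta\omega\cdot\nu}$ by $e^{i\eta\omega\cdot\nu}-1$, yielding $|I_n(t,x)|\leq\eta\,\|\Psi_\eps\|_{L^1}$. For large $\eta$, stationary phase on $\mathbb{S}^{2N-1}$ at the nondegenerate critical points $\omega=\pm\nu$ of $\omega\mapsto\omega\cdot\nu$, applied to the smooth $\Psi_\eps(t,\cdot)$, gives $|I_n(t,x)|\leq C_\eps\,\eta^{-(N-1/2)}$. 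Combining both regimes and performing the change of variable $u=e^{\alpha_n t}|x|$ (so $\alpha_n\,dt=du/u$) produces
\[
|\Phi_n^\eps(x)|\leq C_\eps\sqrt{\alpha_n}\int_0^{T_\eps}\min\bigl(e^{\alpha_n t}|x|,(e^{\alpha_n t}|x|)^{-(N-1/2)}\bigr)\,dt\leq\frac{C_\eps'}{\sqrt{\alpha_n}}\int_0^\infty\min(u,u^{-(N-1/2)})\frac{du}{u},
\]
the last integral being finite for $N\geq 1$.

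Separately, Plancherel and dominated convergence give $\|\Phi_n^\eps\|_{L^2}^2\asymp\int|\Psi_\eps(t,\omega)|^2\,e^{-2N\alpha_n t}\,d\omega\,dt\to 0$, since the integrand is dominated by $|\Psi_\eps|^2\in L^1$ and tends to $0$ pointwise. Using the elementary inequality $e^y-1\leq y\,e^y$ exactly as in the proof of the preceding corollary, for any fixed $\lambda>0$,
\[
\int_{\R^{2N}}\!\left(e^{|\Phi_n^\eps(x)/\lambda|^2}-1\right)dx\leq\frac{e^{\|\Phi_n^\eps\|_{L^\infty}^2/\lambda^2}}{\lambda^2}\|\Phi_n^\eps\|_{L^2}^2\longrightarrow 0,
\]
whence $\|\Phi_n^\eps\|_{\cL}\leq\lambda$ for $n$ large. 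Since $\lambda>0$ is arbitrary, $\|\Phi_n^\eps\|_{\cL}\to 0$. Combined with the residual bound, $\limsup_n\|\Phi_n\|_{\cL}\lesssim\eps$; sending $\eps\to 0$ completes the proof. The main obstacle is the $L^\infty$ bound for $\Phi_n^\eps$: the smoothness of $\Psi_\eps$ is essential for the stationary-phase step at large frequencies, while the vanishing spherical average is precisely what makes the small-$\eta$ contribution integrable after the exponential change of variable, the two regimes meeting at $\eta\sim 1$.
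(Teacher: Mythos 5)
Your proof is correct, and its global scaffolding coincides with the paper's: reduce by density to a smooth, spherically mean-zero amplitude (controlling the residual in $H^N(\R^{2N})$ and hence in ${\mathcal L}$ via the Sobolev embedding), prove the uniform bound $\|\Phi_n^\varepsilon\|_{L^\infty}\lesssim_\varepsilon \alpha_n^{-1/2}$ together with $\|\Phi_n^\varepsilon\|_{L^2}\to 0$, and conclude through the elementary inequality ${\rm e}^y-1\leq y\,{\rm e}^y$. Where you genuinely diverge is in the key $L^\infty$ estimate. The paper splits the $\xi$-integral at $|\xi|=\max(1/|x|,1)$: on the inner region the vanishing spherical mean allows replacing ${\rm e}^{i x\cdot\xi}$ by ${\rm e}^{i x\cdot\xi}-1$ (bounded by $|x||\xi|\leq 1$), and on the outer region a single integration by parts along $\frac{x}{|x|^{2}}\cdot\nabla_\xi$ produces the convergent factor $\int_{\rho\geq 1/|x|}\rho^{-2}\,d\rho\sim|x|$; no curvature of the sphere is used. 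You instead pass to the variables $(t,\omega)$, use the mean-zero cancellation for small $\eta={\rm e}^{\alpha_n t}|x|$ and stationary phase on ${\mathbb S}^{2N-1}$ (decay $\eta^{-(N-1/2)}$, i.e.\ the curvature of the sphere and smoothness of $\Psi_\varepsilon$ in $\omega$) for large $\eta$, then integrate in $t$ via $u={\rm e}^{\alpha_n t}|x|$ to harvest the factor $\alpha_n^{-1/2}$. Both regimes are handled correctly, the two-sided bound $\min\bigl(u,u^{-(N-1/2)}\bigr)u^{-1}$ is integrable on $(0,\infty)$ for every $N\geq1$, and your mean-zero smooth approximation of $\Psi=\Phi-\phi$ is legitimate since spherical averaging is an $L^2$-projection. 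Note that your stationary-phase input is stronger than necessary: any decay $\eta^{-\delta}$ with $\delta>0$ would make the $u$-integral converge, which is exactly why the paper can get by with one non-stationary-phase integration by parts; your route buys a cleaner two-regime picture and an essentially dimension-free computation, at the cost of invoking the spherical stationary-phase machinery.
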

\begin{proof}
 Let us first consider the case when  $  {\Phi} $ belongs to ${\cD}(\R_+\times {\mathbb  S}^{2N-1})$ and  decompose $ \Phi_n (x)$ as follows:
  $$\Phi_n (x)= \Phi^{(1)}_n (x)+ \Phi^{(2)}_n(x), \quad \mbox{with} \quad $$
  $$  \Phi^{(1)}_n (x):= \frac {1} { (2\pi)^{ 2 N }} \sqrt{ \frac {1} { \alpha_n}}\int_{{ 1\leq |\xi| \leq \max(\frac{1}{|x|},1)}}\frac{ {\rm e}^{ i \,x \cdot \xi}}{|\xi|^{ 2 N }}   \;\Big[ {\Phi} \Big(\frac {\log |\xi| } {\alpha_n}, \omega \Big)- {\phi} \Big(\frac {\log |\xi| } {\alpha_n} \Big)\Big] \, d \xi\,\cdot$$
  Observe that  in view of \eqref{condav} we have 
  $$ \sqrt{ \frac {1} { \alpha_n}}\int_{{ 1\leq |\xi| \leq \max(\frac{1}{|x|},1)}}\frac{1}{|\xi|^{ 2 N }}   \;\Big[ {\Phi} \Big(\frac {\log |\xi| } {\alpha_n}, \omega \Big)- {\phi} \Big(\frac {\log |\xi| } {\alpha_n} \Big)\Big] \, d \xi\,= 0.$$
Therefore
 \begin{eqnarray*}\big| \Phi^{(1)}_n (x) \big|&=& \frac {1} { (2\pi)^{ 2 N }} \sqrt{ \frac {1} { \alpha_n}} \, \Big|  \int_{{ 1\leq |\xi| \leq \max(\frac{1}{|x|},1)}}\frac{ ({\rm e}^{ i \,x \cdot \xi}-1)}{|\xi|^{ 2 N }}   \;\Big[ {\Phi} \Big(\frac {\log |\xi| } {\alpha_n}, \omega \Big)- {\phi} \Big(\frac {\log |\xi| } {\alpha_n} \Big)\Big] \, d \xi \Big|  \\ & \lesssim & \frac{|x|}{\sqrt{\alpha_n}} \int_1^{\max(\frac{1}{|x|},1)} \int_{{\mathbb  S}^{2N-1}}\Big| {\Phi} \Big(\frac {\log \rho} {\alpha_n}, \omega \Big)- {\phi} \Big(\frac {\log \rho } {\alpha_n} \Big)\Big| \, d\rho \, d \omega \\ & \lesssim & \frac{\;1}{\sqrt{\alpha_n}}\, \cdot \end{eqnarray*}
 Now  to estimate $\Phi^{(2)}_n$, we shall argue as for the proof of  Assertion \eqref{comp}.  Thus integrating by parts, we deduce that
   $ \Phi^{(2)}_n(x) = \Phi^{(2,1)}_n (x)+ \Phi^{(2,2)}_n(x)$, with
    \begin{eqnarray*} \Phi^{(2,1)}_n(x) &= & \frac {i} { (2\pi)^{ 2 N }} \sqrt{ \frac {1} { \alpha_n}}\int_{  |\xi| \geq  \max(\frac{1}{|x|},1)}  {\rm e}^{ i x \cdot \xi} \frac {x} {|x|^{2}}\cdot \nabla_\xi\Big[\frac{ 1}{|\xi|^{ 2 N }} \Big( {\Phi} \Big(\frac {\log |\xi| } {\alpha_n}, \omega \Big)- {\phi} \Big(\frac {\log |\xi| } {\alpha_n} \Big)\Big)\Big]   d \xi,  \\ \|\Phi^{(2,2)}_n\|_{L^\infty} & \lesssim & \frac{1}{\sqrt{\alpha_n}} \cdot \end{eqnarray*}  
   By straightforward  computations, one can prove that    \begin{eqnarray*}  \Big|\Phi^{(2,1)}_n (x)\Big| \lesssim   \frac{1}{\sqrt{\alpha_n} |x|} \int_{\rho \geq  \max(\frac{1}{|x|},1)}  \frac{d \rho}{\rho^2}  \lesssim   \frac{\;1}{\sqrt{\alpha_n}}\, \cdot\end{eqnarray*}  
    In summary, in the case when $  {\Phi} $ belongs to ${\cD}(\R_+\times {\mathbb  S}^{2N-1})$, we have proved  that
    $$  \|\Phi_n\|_{L^\infty}   \lesssim  \frac{1 }{\sqrt{\alpha_n}} \, \cdot$$ 
   This achieves the proof of the result, noticing that $\| \Phi_n\|_{L^2} \stackrel{n\to\infty}\longrightarrow  0$ and remembering  that for bounded functions the Orlicz space ${\cL}$ acts like $L^2$. \\
    
 \noindent Let us now treat the case when  $  {\Phi} \in L^2(\R_+\times {\mathbb  S}^{2N-1})$. Density arguments ensure that 
for  any $\varepsilon>0$,  there exists $  {\Phi}_\varepsilon\in{\cD}(\R_+\times {\mathbb  S}^{2N-1})$ such that 
$$
\|  {\Phi}-  {\Phi}_\varepsilon\|_{L^2}\leq \varepsilon\,.
$$
Therefore $\varepsilon>0$ being fixed, we can write 
$$ \Phi_n(x)=  \frac {1} { (2\pi)^{ 2 N }} \sqrt{ \frac {1} { \alpha_n}}\int_{\R^{ 2 N }}\frac{ {\rm e}^{ i \,x \cdot \xi}}{|\xi|^{ 2 N }}   \;\Big[ {\Phi}_\varepsilon \Big(\frac {\log |\xi| } {\alpha_n}, \omega \Big)- {\phi}_\varepsilon \Big(\frac {\log |\xi| } {\alpha_n} \Big)\Big] \, d \xi +{\rm t}_{n,\varepsilon}(x)\,,$$ 
where  $ \ds  {\phi}_\varepsilon(t):= \frac {1} {\omega_{2N-1}} \int_{{\mathbb  S}^{2N-1}}  \; {\Phi}_\varepsilon (t, \omega ) \, d  \omega $ and $ \| {\rm t}_{n,\varepsilon}\|_{ H^N(\R^{ 2 N })}  \lesssim \varepsilon$.   This ends the proof of the result by virtue of the Sobolev embedding \eqref{embedorliczNN}.

\end{proof}

\medbreak

 \begin{rem}
Let us also note that 
$$ \|\Phi_n\|^2_{\dot H^N(\R^{2N})}= \|\Phi\|^2_{L^2(\R_+\times {\mathbb  S}^{2N-1})} - \omega_{2N-1}\,\|\phi\|^2_{L^2(\R_+)}\,.$$ 
  \end{rem}
  
  \medbreak

\noindent Now let us return to the proof of Theorem \ref{main2}. In view of the above analysis, up to a subsequence extraction, we obtain a decomposition of $(u_n)$ under the form 
 \eqref{first}, and for any $j \geq 1$ we have  
\begin{equation}  \label{partj}g^{(j)}_n (x)=  \sum_{k=1}^K    \frac {C_{N}} { \sqrt{\alpha_n^{(j)}}}\int_{\R^{ 2 N }}\frac{ {\rm e}^{ i \, (x-x^{(j,k)}_n) \cdot \xi}}{|\xi|^{ 2 N }}   \; {\phi^{(j,k)}} \Big(\frac {\log |\xi| } {\alpha^{(j)}_n} \Big) \, d \xi+{\cR}^{(j,k)}_n(x)\, ,\end{equation}
 where the couples $(\underline{x}^{(j,k)},\phi^{(j,k)} )$ are pairwise
orthogonal in the sense of \eqref{caseII}, and where 
 \begin{equation}  \label{remj} \limsup_{n\to\infty}\;\|{\cR}^{(j,K)}_n\|_{\mathcal
L}\stackrel{K\to\infty}\longrightarrow 0, \quad \mbox{and}\end{equation}
\begin{equation}  \label{eqj} \|
g_n^{(j)}\|_{\dot H^N(\R^{2N})}^2= \Sum_{k=1}^{K}\,  \|  {\phi^{(j,k)} }\|_{L^2(\R_+)}^2+\|
{\cR}^{(j,k)}_n\|_{\dot H^N(\R^{2N})}^2+\circ(1),\quad n\to\infty\,.\end{equation}
Summing  Decomposition \eqref{partj}, we deduce that  up to a subsequence extraction
$$ u_n(x)= \Sum_{j=1}^{L} \sum_{k=1}^{K_j}  \frac {C_{N}} { \sqrt{ \alpha_n^{(j)}}}\int_{\R^{ 2 N }}\frac{ {\rm e}^{ i  (x-x^{(j,k)}_n) \cdot \xi}}{|\xi|^{ 2 N }}{\phi^{(j,k)}} \Big(\frac {\log |\xi| } {\alpha^{(j)}_n} \Big) d \xi+{\rm r}^{(L,k_1,\cdots,K_L)}_n(x),$$
with under notation of  \eqref{first} and \eqref{partj}
$${\rm r}^{(L,k_1,\cdots,K_L)}_n=  {\rm t}^{(L)}_n +  \Sum_{j=1}^{L} {\cR}^{(j,K_j)}_n\,.$$ 
Since $(|D|^N{\rm t}^{(L)}_n)$ is  $\log $-unrelated to any scale $(\alpha_n^{(j)})$,  $j =1,\cdots,L$, and $(|D|^N g_n^{(j)})$  is $\alpha^{(j)}$ $\log $-oscillating where the scales $\alpha^{(j)}$ are pairwise
orthogonal in the sense of \eqref{caseI}, we deduce from   \eqref{ortogonall2}  and \eqref{eqj} that for any $L$ and $K_j$
\begin{equation} \label{ort**} \|
u_n\|_{\dot H^N(\R^{2N})}^2= \Sum_{j=1}^{L} \Sum_{k=1}^{K_j}\,  \|  {\phi^{(j,k)} }\|_{L^2(\R_+)}^2+\|
{\rm r}^{(L,k_1,\cdots,K_L)}_n\|_{\dot H^N(\R^{2N})}^2+\circ(1),\quad n\to\infty\,,\end{equation} 
which concludes the proof of the theorem.



\section{Appendix}\label{Appendix}


\subsection{ Connection between ${\mathcal B}$ and Orlicz spaces} 
\label{useful*}
The following result allows to relate the Orlicz norm with the $ \|\cdot\|_{\mathcal B}$ norm:
 \begin{prop}
\label{uspprop*}
There is a positive constant $C$ such that 
$$ \|w\|_{  {\mathcal
L}}  \leq C  \|\wt {w}\|_{\mathcal B}, $$ 
where \begin{equation}
\label{eq*} w(x):= \frac {1} { (2\pi)^{2N}} \, \int_{|\xi| \geq1}\frac{ {\rm e}^{ i \, x \cdot \xi} \,}{|\xi|^{2N}}   \; {\wt {w}}(\log |\xi|,\omega) \, d \xi \, ,\end{equation}
with $ {\wt {w}}$  in $L^2 (\R_+\times {\mathbb  S}^{2N-1})$ and $$\|\wt {w}\|_{\mathcal B}:=\sup_{j\in\Z}\; \Big(\int_{{\mathbb  S}^{2N-1}} d  \omega \,\int_{2^j}^{2^{j+1}}\big| \; {\wt{w}}\big(t,\omega \big)\big|^2 \, d t \Big)^{\frac  12 }.$$  \end{prop}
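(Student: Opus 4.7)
The plan is to reduce the Orlicz estimate to a scale of $L^{2p}$ bounds for $w$, which will be produced by a dyadic decomposition in the variable $t = \log|\xi|$ that matches the definition of $\|\wt w\|_{\mathcal B}$. Starting from the Taylor expansion $\phi(s) = e^{s^2} - 1 = \sum_{p \geq 1} s^{2p}/p!$, one has
$$
\int_{\R^{2N}} \Big(e^{|w(x)|^2/\lambda^2} - 1\Big)\, dx = \sum_{p \geq 1} \frac{\|w\|_{L^{2p}(\R^{2N})}^{2p}}{p!\, \lambda^{2p}},
$$
so it will suffice to establish the uniform inequality $\|w\|_{L^{2p}(\R^{2N})} \leq C\sqrt{p}\, \|\wt w\|_{\mathcal B}$ for every $p \geq 1$. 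Once this is known, Stirling's bound $p^p/p! \leq e^p/\sqrt{2\pi p}$ turns the Taylor series into a convergent geometric sum which is $\leq 1$ as soon as $\lambda \geq C'\|\wt w\|_{\mathcal B}$ for a universal $C'$, yielding the claim.

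For the $L^{2p}$ bound, I would write $w = \sum_{k \in \Z} w_k$, where
$$
w_k(x) := \frac{1}{(2\pi)^{2N}} \int_{e^{2^k} \leq |\xi| \leq e^{2^{k+1}}} \frac{e^{ix\cdot\xi}}{|\xi|^{2N}}\, \wt w(\log|\xi|, \omega)\, d\xi
$$
corresponds to the dyadic strip $2^k \leq t \leq 2^{k+1}$. Two endpoint estimates are available on each piece. Bringing absolute values inside and applying Cauchy-Schwarz successively in $t$ (over an interval of length $2^k$) and in $\omega$ over the sphere yields the $L^\infty$ bound
$$
\|w_k\|_{L^\infty(\R^{2N})} \leq C\, 2^{k/2}\, \|\wt w\|_{\mathcal B}.
$$
Plancherel together with the polar change of variables $t = \log\rho$ gives
$$
\|w_k\|_{L^2(\R^{2N})}^2 \leq C\, e^{-2N \cdot 2^k}\, \|\wt w\|_{\mathcal B}^2,
$$
the exponential factor arising from the weight $|\xi|^{-4N}$ restricted to $\{|\xi| \geq e^{2^k}\}$.

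Interpolating, $\|w_k\|_{L^{2p}} \leq \|w_k\|_{L^2}^{1/p}\|w_k\|_{L^\infty}^{1-1/p} \lesssim 2^{k(1/2 - 1/(2p))} e^{-N\cdot 2^k / p}\, \|\wt w\|_{\mathcal B}$. Summing in $k$ by Minkowski and comparing with an integral via $u = 2^k$ reduces the sum to an incomplete Gamma integral of the form $\int_0^\infty u^{-1/2 - 1/(2p)} e^{-Nu/p}\, du \sim (p/N)^{1/2 - 1/(2p)}\Gamma(1/2 - 1/(2p))$, and since $\Gamma(1/2 - 1/(2p)) \to \Gamma(1/2) = \sqrt{\pi}$ as $p \to \infty$, one concludes
$$
\|w\|_{L^{2p}(\R^{2N})} \;\lesssim\; p^{1/2 - 1/(2p)}\, \|\wt w\|_{\mathcal B} \;\lesssim\; \sqrt{p}\,\|\wt w\|_{\mathcal B}.
$$

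The main obstacle is maintaining uniform control of the constants as $p \to \infty$: the transition between the $L^\infty$-dominated regime (contributing for $k \to -\infty$) and the $L^2$-dominated regime (for $k \to +\infty$) occurs at the crossover scale $2^k \sim p$, and the two geometric sums must be pieced together carefully so as to yield exactly $\sqrt{p}$ and not a spurious $\log p$ factor. A further delicate point is the endpoint $p = 1$ of the Taylor series, where the interpolation exponent $1/2 - 1/(2p)$ degenerates to $0$ and the dyadic sum over $k \to -\infty$ fails to converge by itself; this term must be treated separately by invoking Plancherel together with the global $L^2(\R_+\times\mathbb S^{2N-1})$ hypothesis on $\wt w$ to close the estimate.
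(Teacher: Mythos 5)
Your route is genuinely different from the paper's, although both start from the same reduction: expanding $e^{|w|^2/\lambda^2}-1$ in a Taylor series and seeking $\|w\|_{L^{2p}}\lesssim \sqrt{p}\,\|\wt{w}\|_{\mathcal B}$ uniformly in $p$, then concluding by Stirling. The paper keeps $w$ whole, applies the Hausdorff--Young inequality $\|w\|_{L^{2p}}\lesssim\|\wh{w}\|_{L^{2p/(2p-1)}}$ and then H\"older on each dyadic block $2^j\leq t\leq 2^{j+1}$, ending with $\|w\|_{L^{2p}}^{2p}\lesssim C^{2p}\|\wt{w}\|_{\mathcal B}^{2p}(2p-1)^{p-1}$. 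You instead decompose $w=\sum_k w_k$ over the strips $2^k\leq\log|\xi|\leq 2^{k+1}$ and interpolate between the two per-block endpoint bounds $\|w_k\|_{L^\infty}\lesssim 2^{k/2}\|\wt{w}\|_{\mathcal B}$ and $\|w_k\|_{L^2}\lesssim e^{-N2^k}\|\wt{w}\|_{\mathcal B}$, both of which are correct. For $p\geq 2$ your summation also goes through cleanly and uniformly: comparing $\sum_k 2^{k\alpha}e^{-N2^k/p}$ (with $\alpha=\frac12-\frac1{2p}\in[\frac14,\frac12)$) with $\int_0^\infty u^{\alpha-1}e^{-Nu/p}\,du=(p/N)^{\alpha}\Gamma(\alpha)$ loses no logarithm, since $\Gamma(\alpha)$ stays bounded on $[\frac14,\frac12]$; the crossover issue you worry about is harmless. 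Your argument is arguably more elementary than the paper's (no Hausdorff--Young) and makes the origin of the factor $\sqrt p$ transparent.

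The place where your proposal does not close is exactly the endpoint you flag, $p=1$, and the repair you suggest does not repair it: Plancherel plus the global $L^2(\R_+\times{\mathbb S}^{2N-1})$ hypothesis only gives $\|w\|_{L^2}\lesssim\|\wt{w}\|_{L^2}$, not $\|w\|_{L^2}\lesssim\|\wt{w}\|_{\mathcal B}$, so the inequality of the proposition (a bound by the $\mathcal B$-norm alone, with a universal constant) would not follow from your write-up. You should know, however, that the paper's own proof is no better at this point: it dismisses $p=1$ with ``along the same lines'', whereas its block exponent $\frac{p-1}{2p-1}$ vanishes at $p=1$ and the sum over the dyadic blocks accumulating at $t=0^+$ diverges. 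In fact the $L^2$ piece genuinely cannot be controlled by $\|\wt{w}\|_{\mathcal B}$ alone: taking $\wt{w}(t,\omega)=t^{-1/2}{\bf 1}_{[\varepsilon,1]}(t)$ gives $\|\wt{w}\|_{\mathcal B}^2\leq \omega_{2N-1}\log 2$ uniformly in $\varepsilon$, while $\|w\|_{L^2}^2\gtrsim \log(1/\varepsilon)$, and since $\|w\|_{\mathcal L}\geq\|w\|_{L^2}$ the stated inequality fails for this family. So the difficulty you isolated is a real defect of the statement and of the paper's argument, not merely of your approach; closing it requires either strengthening the $\mathcal B$-norm near $t=0$ (for instance using intervals of unit length, or adjoining the block $[0,1]$, which is harmless in the paper's application where the low frequencies $1\leq|\xi|\leq e$ carry no concentration) or importing that extra information on $\wt{w}$ explicitly, and neither step appears in your proposal or in the paper.
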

\begin{proof}
For fixed $\lambda >0$, let us estimate the integral:
 $$
\int_{\R^{2N}}\;\Big({\rm
e}^{|\frac{w (x)}{\lambda}|^2}-1\Big)\,dx\,.
$$
Obviously
$$ \int_{\R^{2N}}\;\Big({\rm
e}^{|\frac{w(x)}{\lambda}|^2}-1\Big)\,dx= \sum_{p \geq 1} \frac{\|w\|^{2p }_{L^{2p }}}{\lambda^{2p } p !} \, \cdot$$ 
Firstly let us investigate $\|w\|^{2p }_{L^{2p }}$. We have that for any $p\geq 1$
$$\|w\|^{2p }_{L^{2p }} \leq C^{2p } \|\wh {w}\|^{2p }_{L^{\frac {2p }{2p-1 } }}.$$ 
But in view of \eqref{eq*} and H\"older inequality, we get for any $p \geq 2$
 \begin{eqnarray*}  \|\wh {w}\|^{\frac {2p }{2p-1 } }_{L^{\frac {2p }{2p-1 } }} &\lesssim& \|\wt {w}\|^{\frac {2p }{2p-1 } }_{\mathcal B} \sum_{j\in\Z} \Big(\int_{2^j}^{2^{j+1}} {\rm
e}^{- \frac {2Nt }{p-1 }} dt\Big)^{\frac {p-1 }{2p-1 } }\,.\end{eqnarray*}
\noindent 
Clearly, 
$$ \Big(\int_{2^j}^{2^{j+1}} {\rm
e}^{- \frac {2Nt }{p-1 }} dt\Big)^{\frac {p-1 }{2p-1 } } = \Big(\frac {p-1 }{2N}  \Big)^{\frac {p-1 }{2p-1 } } \Big( {\rm
e}^{ \frac {-2N2^j }{p-1 }} -  {\rm
e}^{ \frac {-2N2^{j+1} }{p-1 }} \Big)^{\frac {p-1 }{2p-1 } }\, .$$ 
Choosing $j_0$ such that $\ds \frac 1 2 \leq \frac {2^{j_0} }{2p-1 } \leq 1$, we infer that 
$$ \sum_{j \geq j_0} \Big(\int_{2^j}^{2^{j+1}} {\rm
e}^{- \frac {2Nt }{p-1 }} dt\Big)^{\frac {p-1 }{2p-1 } } \lesssim \big(2 p-1  \big)^{\frac {p-1 }{2p-1 } }\,.$$
Indeed observing that the ratio $\ds \frac {p-1 }{2p-1} $  is uniformly bounded with respect to $p \geq 2$,
we deduce that for all $j$ 
$$ \Big(\int_{2^j}^{2^{j+1}} {\rm
e}^{- \frac {2Nt }{p-1 }} dt\Big)^{\frac {p-1 }{2p-1 } }\lesssim   \big(2 p-1  \big)^{\frac {p-1 }{2p-1 } } {\rm
e}^{- \frac {2N2^{j} }{2p-1 }} \,,$$
which according to the choice of $j_0 $  gives rise to 
 \begin{eqnarray*} \sum_{j \geq j_0} \Big(\int_{2^j}^{2^{j+1}} {\rm
e}^{- \frac {2Nt }{p-1 }} dt\Big)^{\frac {p-1 }{2p-1 } } &\lesssim& \big(2 p-1  \big)^{\frac {p-1 }{2p-1 } }  \sum_{j \geq j_0}  {\rm
e}^{- \frac {2N2^{j} }{2p-1 }} \\ &\lesssim  &  \big(2 p-1  \big)^{\frac {p-1 }{2p-1 } }\sum_{j \geq j_0}  \big(2 p-1  \big) 2^{-j } \\ &\lesssim  &  \big(2 p-1  \big)^{\frac {p-1 }{2p-1 } }  \big(2 p-1  \big) 2^{-j_0 }\lesssim \big(2 p-1  \big)^{\frac {p-1 }{2p-1 } }\,.\end{eqnarray*} 
 On the other hand for $j \leq j_0$, we get 
 \begin{eqnarray*}  \Big(\int_{2^j}^{2^{j+1}} {\rm
e}^{- \frac {2Nt }{p-1 }} dt\Big)^{\frac {p-1 }{2p-1 } } &=& \Big(\frac {p-1 }{2N}  \Big)^{\frac {p-1 }{2p-1 } } {\rm
e}^{- \frac {2N2^{j+1} }{2p-1 }} \Big( {\rm
e}^{ \frac {2N2^j }{p-1 }} -  1\Big)^{\frac {p-1 }{2p-1 } }\\ &\lesssim  & \Big(2p-1 \Big)^{\frac {p-1 }{2p-1 } }\Big(\frac {2^j }{2p-1 }  \Big)^{\frac {p-1 }{2p-1 } }\lesssim   \big(2^j  \big)^{\frac {p-1 }{2p-1 } }\, .\end{eqnarray*} 
In view of the choice of $j_0$, this implies that $$ \sum_{ j \leq j_0} \Big(\int_{2^j}^{2^{j+1}} {\rm
e}^{- \frac {2Nt }{p-1 }} dt\Big)^{\frac {p-1 }{2p-1 } }\lesssim   \sum_{ j \leq j_0} \big(2^{\frac {p-1 }{2p-1 } }  \big)^j \lesssim 2^{( j_0+1)\frac {p-1 }{2p-1 } }\lesssim \big(2 p-1  \big)^{\frac {p-1 }{2p-1 } }\,.$$
By virtue of the above, this ensures  that 
$$ \sum_{j\in\Z} \Big(\int_{2^j}^{2^{j+1}} {\rm
e}^{- \frac {2Nt }{p-1 }} dt\Big)^{\frac {p-1 }{2p-1 } } \lesssim \big(2 p-1  \big)^{\frac {p-1 }{2p-1 } }\,.$$
We deduce that for any $p \geq 2$
$$\|w \|^{2p }_{L^{2p }} \lesssim C^{2p } \,  \|\wt {w}\|^{2p }_{\mathcal B} \,  \big(2 p-1  \big)^{p-1  }\,.$$ 
Along the same lines, we obtain 
$$ \|w\|^{2 }_{L^{2 }} \lesssim  \|\wt {w}\|^{2 }_{\mathcal B}\,,$$ 
which leads to 
$$ \int_{\R^{2N}}\;\Big({\rm
e}^{|\frac{w(x)}{\lambda}|^2}-1\Big)\,dx\lesssim  \sum_{p \geq 1} \frac{C^{2p }\,  \|\wt {w}\|^{2p }_{\mathcal B} \,  \big(2 p-1  \big)^{p-1  }}{\lambda^{2p } p !} \, \cdot$$ 
This ends the proof of the proposition thanks to  Stirling formula.\end{proof}

\medbreak

\subsection{ Fourier approximation of generalizations of example by Moser}
\label{genemoser}
The aim of this paragraph is to establish Proposition \ref{genemoserN} which  allows to relate the generalizations of example by Moser to the elementary concentrations involving in Decomposition \eqref{decompN}.
  \begin{proof}[Proof of Proposition \ref{genemoserN} ]
The proof of  Proposition \ref{genemoserN} goes along the proof of  Lemma~\ref{avlem}. We sketch it here for the convenience of the reader. Before entering into the details, let us point out that the function  $$ \wt g_n(x):= \wt C_N \,\sqrt{\alpha_n}\;\psi\left(\frac{-\log|x|}{\alpha_n}\right)$$ is supported in the unit ball of $\R^{ 2 N }$ and also writes:
\begin{equation}\label{eq0}   \wt g_n(x)=   \frac {C_{N}} {  \sqrt{\alpha_n}} \int_{1 \leq |\xi| \leq \frac{ 1}{|x|}}\frac{ 1}{|\xi|^{ 2 N }}   {\varphi} \Big(\frac { \log |\xi|} {\alpha_n}\Big) \, d \xi\,.\end{equation}
 \noindent 
Now by density arguments, we can  as in the proof of Lemma \ref{avlem} reduce to the case when $  \varphi $ belongs to ${\cD}(\R_+)$. Let us then decompose $ g_n (x)$ as follows:
  $$g_n (x)= g^{(1)}_n (x)+ g^{(2)}_n(x), \quad \mbox{with} \quad $$
  $$  g^{(1)}_n (x):=  \frac {C_{N}} { \sqrt{ \alpha_n}}\int_{{ 1\leq |\xi| \leq \max(\frac{1}{|x|},1)}}\frac{ {\rm e}^{ i \,x \cdot \xi}}{|\xi|^{ 2 N }}   \;{\varphi} \Big(\frac { \log |\xi|} {\alpha_n}\Big)\, d \xi\,\cdot$$ 
 \noindent In one hand  in view of \eqref{eq0}, we have 
$$  g^{(1)}_n (x)=  \wt g_n(x) +  {\rm t}^1_n(x), \quad \mbox{with}$$  
 \begin{eqnarray*}\big| {\rm t}^1_n(x) \big|&=& \frac {C_{N}} {  \sqrt{ \alpha_n}}\Big|  \int_{{ 1\leq |\xi| \leq \max(\frac{1}{|x|},1)}}\frac{ ({\rm e}^{ i \,x \cdot \xi}-1)}{|\xi|^{ 2 N }}    \;{\varphi} \Big(\frac { \log |\xi|} {\alpha_n}\Big)\, \, d \xi \Big|  \\ & \lesssim & \frac{|x|}{\sqrt{\alpha_n}} \int_1^{\max(\frac{1}{|x|},1)} \Big| {\varphi} \Big(\frac {\log \rho} {\alpha_n} \Big)\Big| \, d\rho \lesssim \frac{\;1}{\sqrt{\alpha_n}}\, \cdot  \end{eqnarray*}
On the other hand integrating by parts, we get  $g^{(2)}_n(x)=  g^{(2,1)}_n(x)+g^{(2,2)}_n(x)$ with 
\begin{eqnarray*} g^{(2,1)}_n(x)&=&   \frac {i  \, C_{N}} { \sqrt{ \alpha_n}}\int_{  |\xi| \geq  \max(\frac{1}{|x|},1)}  {\rm e}^{ i x \cdot \xi} \frac {x} {|x|^{2}}\cdot \nabla_\xi\Big[\frac{ 1}{|\xi|^{ 2 N }} \;{\varphi} \Big(\frac { \log |\xi|} {\alpha_n}\Big)\,\Big]   d \xi \quad \mbox{and} \\ \|g^{(2,2)}_n\|_{L^\infty} & \lesssim & \frac{\;1}{\sqrt{\alpha_n}}\, \cdot\end{eqnarray*}
 Clearly  \begin{eqnarray*}  \Big|g^{(2,1)}_n (x)\Big| \lesssim   \frac{1}{\sqrt{\alpha_n} |x|} \int_{\rho \geq  \max(\frac{1}{|x|},1)}  \frac{d \rho}{\rho^2}  \lesssim   \frac{\;1}{\sqrt{\alpha_n}}\, \cdot\end{eqnarray*}  
  This easily ensures the result. \end{proof}




\end{document}